\newtheorem{theorem}{Theorem}
\newtheorem{corollary}{Corollary}
\newtheorem{conj}{Conjecture}
\newtheorem{lemma}{Lemma}
\theoremstyle{definition}
\theoremstyle{remark}
\newtheorem{rem}{Remark}
\numberwithin{equation}{section}
\numberwithin{theorem}{section}
\numberwithin{lemma}{section}
\numberwithin{corollary}{section}
\numberwithin{conj}{section}
\begin{document}

\title[Fractional Powers of the Partition Function]
 {Fractional Powers of the Generating Function for the Partition Function}

\author{Heng Huat Chan and Liuquan Wang}
\address{Department of Mathematics, National University of Singapore, Singapore, 119076, Singapore}
\email{matchh@nus.edu.sg}

\address{School of Mathematics and Statistics, Wuhan University, Wuhan 430072, Hubei, People's Republic of China}
\email{wanglq@whu.edu.cn; mathlqwang@163.com}

\subjclass[2010]{Primary 05A17; Secondary 11P83}

\keywords{Congruences; partitions; powers of eta function; multipartitions}

\dedicatory{}

\maketitle

\begin{abstract}
Let $p_{k}(n)$ be the coefficient of $q^n$ in the series expansion of $(q;q)_{\infty}^{k}$.
It is known that the partition function $p(n)$, which corresponds to the case when $k=-1$, satisfies
congruences such as $p(5n+4)\equiv 0\pmod{5}$. In this article, we discuss
congruences satisfied by $p_{k}(n)$ when $k$ is a rational number.
\end{abstract}

\section{Introduction}

Let $n$ be a positive integer. A partition of a positive  integer $n$ is a finite nonincreasing sequence of positive integers
 $\lambda_1, \lambda_2,\ldots, \lambda_r$ such that $$\sum_{i=1}^r \lambda_i =n.$$
We denote the number of partitions of $n$ by $p(n)$. By convention, we set $p(0)=1$.
It is well known (see \cite[Section 19.3]{Hardy-Wright}) that the generating function of $p(n)$ is
\begin{equation*}
\sum\limits_{n=0}^{\infty}p(n)q^n=\dfrac{1}{(q;q)_{\infty}},
\end{equation*}
where
\[(a;q)_{\infty}=\prod\limits_{n=0}^{\infty}(1-aq^{n}), \quad |q|<1.\]
It was observed by S.\ Ramanujan \cite{Rama1919} that $p(n)$ satisfies the congruences
\begin{align}
p(5n+4) &\equiv  0 \pmod{5}, \label{Rama-mod-5}\\
p(7n+5) &\equiv 0\pmod{7},
 \label{Rama-mod-7}\intertext{and}
p(11n+6) &\equiv 0 \pmod{11}. \label{Rama-mod-11}
\end{align}
For Ramanujan's discussion of (\ref{Rama-mod-5})--(\ref{Rama-mod-11}), see
\cite{Rama1919,Ram-congruence}.

Let $k$ be an integer and define $p_{k}(n)$ by
\begin{align}\label{pk-defn-add}
\sum_{n=0}^{\infty}p_{k}(n)q^n=(q;q)_{\infty}^{k}.
\end{align}
Observe that $p(n)=p_{-1}(n)$. When $k$ is a positive integer, $p_{-k}(n)$ enumerates the number of multipartitions with $k$ components of $n$ \cite{Andrews}. The arithmetic properties of $p_{-k}(n)$ have been extensively studied. For example, A.O.L.\ Atkin \cite{Atkin} gave a list of congruences modulo arbitrary powers of 2, 3, 5 and 7 satisfied by $p_{-k}(n)$.
B.\ Gordon \cite{Gordon} established congruences modulo arbitrary powers of 11 for $p_{-k}(n)$ for $k\in \mathbf{Z}$. From their works, we know that there are many congruences of the form
\begin{align}\label{form}
p_{-k}(\ell n+r)\equiv 0 \pmod{\ell},
\end{align}
where $\ell$ is a prime and $0 \leq r \leq \ell-1$.  I.\ Kiming and J.\ Olsson \cite{Kiming} proved that if $\ell \ge 5$ is a prime, $1\le k \le \ell-1$ and $k\notin \{\ell-3, \ell-1\}$, then a congruence of the form \eqref{form} exists only if $k$ is an odd integer and $24r+k\equiv 0$ (mod $\ell$). M.\ Boylan \cite{Boylan} has found all possible congruences of the form \eqref{form} when $k$ is a positive odd integer not exceeding 47. Recently, by using the theory of modular forms, M.\ Locus and I.\ Wagner \cite{Locus} obtained some congruences of the form \eqref{form} for positive integer $k$ with some restrictions on $\ell$ and $r$.

Around 2003, S.T. Ng \cite{Ng}, under the suggestion of the first author, considered $p_k(n)$ defined in \eqref{pk-defn-add} when $k$ is a negative rational number. He proved, using the theory of modular forms,  that for any $n\ge 0$,
\begin{equation}\label{2-3-mod-19}
p_{-2/3}(19n+9) \equiv  0 \pmod{19}.
\end{equation}
It was also mentioned in \cite{Ng,Yang} that Y.F. Yang showed in an unpublished work that for any $n\ge 0$,
\begin{equation}\label{1-2-mod-17}
p_{-1/2}(17n+11) \equiv 0 \pmod{17}.
\end{equation} 

In this article, we prove numerous congruences satisfied by $p_{k}(n)$ when $k$ is a rational number. We first introduce some notations. For any real number $x$, we denote by $\lfloor x \rfloor$ the integer part of $x$. For any integer $n$ and prime $p$, we use $\mathrm{ord}_p(n)$ to denote the integer $m$ such that $p^m|n$ and $p^{m+1}\nmid n$. For any rational number $x$, we write it in reduced form $x=u/v$ with $u,v\in \mathbf{Z}$, $\gcd(u,v)=1$ and $v\ge 1$, and we denote $\mathrm{denom} (x)=v$ to be the denominator of $x$. In the following theorem, we determine the denominator of $p_{k}(n)$.
\begin{theorem}\label{denominator}
Let $k=a/b$, where $a,b\in \mathbf{Z}$, $b\ge 1$ and $\gcd(a,b)=1$. We have
\begin{align}
\mathrm{denom} \left(p_{k}(n)\right)=b^n\prod\limits_{p|b}p^{\alpha_p(n)}
\end{align}
where
\begin{align}
\alpha_p(n)=\mathrm{ord}_{p}(n!)=\left\lfloor \frac{n}{p}\right\rfloor +\left\lfloor \frac{n}{p^2}\right\rfloor+\cdots.
\end{align}
\end{theorem}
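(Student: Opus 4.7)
The plan is to expand $(q;q)_\infty^{a/b}$ using the generalized binomial series and then analyze $p$-adic valuations one prime at a time. Set $f(q) := (q;q)_\infty - 1 \in q\,\mathbf{Z}[[q]]$; by Euler's pentagonal number theorem $f(q) = -q - q^2 + q^5 + q^7 - \cdots$, but the only features needed are $f \in q\,\mathbf{Z}[[q]]$ and $f(q) = -q + O(q^2)$. Since $f(q)^j$ begins at $q^j$, only finitely many summands in $(1+f(q))^{a/b} = \sum_{j\geq 0} \binom{a/b}{j} f(q)^j$ contribute to the coefficient of $q^n$, yielding
\begin{equation*}
p_k(n) = \sum_{j=0}^{n} \binom{a/b}{j}\, e_{n,j}, \qquad e_{n,j} := [q^n] f(q)^j \in \mathbf{Z},
\end{equation*}
with the crucial endpoint value $e_{n,n} = (-1)^n$, a unit at every prime.

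Fix a prime $p$ and expand
\begin{equation*}
\binom{a/b}{j} = \frac{a(a-b)(a-2b)\cdots\bigl(a-(j-1)b\bigr)}{b^{\,j}\,j!}.
\end{equation*}
If $p \mid b$, then $\gcd(a,b)=1$ forces $p \nmid (a-ib)$ for every $i$ (since $a-ib \equiv a \pmod{p}$), so the numerator is a $p$-adic unit and $\mathrm{ord}_p \binom{a/b}{j} = -j\,\mathrm{ord}_p(b) - \alpha_p(j)$. If $p \nmid b$, then $a/b \in \mathbf{Z}_p$, and since the polynomial $x \mapsto \binom{x}{j} = x(x-1)\cdots(x-j+1)/j!$ maps $\mathbf{Z}$ into $\mathbf{Z}$, the density of $\mathbf{Z}$ in $\mathbf{Z}_p$ together with continuity of polynomials forces $\binom{a/b}{j} \in \mathbf{Z}_p$.

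Now assemble over primes. For each $p \nmid b$, every summand $\binom{a/b}{j} e_{n,j}$ lies in $\mathbf{Z}_p$, hence so does $p_k(n)$, contributing nothing to the denominator. For each $p \mid b$, the inequality
\begin{equation*}
(n-j)\,\mathrm{ord}_p(b) + \bigl(\alpha_p(n)-\alpha_p(j)\bigr) \geq 1 \quad (0 \leq j < n)
\end{equation*}
shows that the $j=n$ term strictly dominates in $p$-adic absolute value; since $e_{n,n} = \pm 1$ is a $p$-adic unit, the ultrametric inequality yields $\mathrm{ord}_p\bigl(p_k(n)\bigr) = -n\,\mathrm{ord}_p(b) - \alpha_p(n)$ exactly. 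Taking the product $\prod_{p\mid b} p^{\,n\,\mathrm{ord}_p(b) + \alpha_p(n)} = b^n \prod_{p\mid b} p^{\alpha_p(n)}$ gives the claimed denominator. The main obstacle is the $p \nmid b$ case, in which many individual $\binom{a/b}{j}$ with large denominators must conspire to land in $\mathbf{Z}_p$; once the continuity argument for integer-valued polynomials on $\mathbf{Z}_p$ is in hand, the $p \mid b$ case follows cleanly from the no-cancellation observation $e_{n,n} = (-1)^n$.
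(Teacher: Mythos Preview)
Your proof is correct and takes a genuinely different route from the paper's. The paper expands the infinite product factor by factor, writing $(1-q^m)^{a/b} = \sum_n (-1)^n c_{a/b}(n) q^{mn}$ and assembling $p_{a/b}(n)$ as a multi-index convolution $\sum c_{a/b}(n_1)\cdots c_{a/b}(n_r)(-1)^{n_1+\cdots+n_r}$ over $m_1 n_1 + \cdots + m_r n_r = n$ with $0<m_1<\cdots<m_r$; for primes $p\nmid b$ it then shows $\mathrm{ord}_p\bigl(\prod_{i=0}^{n-1}(a-ib)\bigr) \ge \mathrm{ord}_p(n!)$ by an explicit count of residues in arithmetic progressions, and for $p\mid b$ it isolates the unique dominant term $(r,m_1,n_1)=(1,1,n)$ via the subadditivity inequality $\sum_i \lfloor n_i/p^t \rfloor \le \lfloor (\sum_i n_i)/p^t \rfloor$. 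Your single binomial expansion of $(1+f(q))^{a/b}$ replaces the multi-index convolution by a single sum $\sum_{j\le n}\binom{a/b}{j} e_{n,j}$, which makes the ``unique dominant term'' step immediate once $e_{n,n}=(-1)^n$ is observed. For $p\nmid b$ your $p$-adic continuity argument for integer-valued polynomials is slicker than the paper's residue count, and it yields the same conclusion that each $\binom{a/b}{j}\in\mathbf{Z}_p$ individually (so no cancellation among terms is actually needed, contrary to the phrasing in your final sentence). The paper's approach has the minor side benefit of producing the intermediate fact that $\mathrm{denom}(c_{a/b}(n))$ divides $b^{2n-1}$, which it reuses in the proof of Lemma~\ref{binom-lem}; your argument gives the equivalent $\binom{a/b}{j}\in\mathbf{Z}_p$ for $p\nmid b$ just as directly.
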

This theorem implies that $b$ and the denominator of $p_{a/b}(n)$ share the same prime divisors.
For instance, from the series expansions
\begin{align}\label{q-1-2}
(q;q)_{\infty}^{-1/2}=&1 + \dfrac{1}{2}q + \dfrac{7}{8}q^2 + \dfrac{17}{16}q^3 + \dfrac{203}{128}q^4 + \dfrac{455}{256}q^5 + \dfrac{2723}{1024}q^6 + \dfrac{6001}{2048}q^7 + \dfrac{133107}{32768}q^8 \nonumber \\
&+\dfrac{312011}{65536}q^9 + \dfrac{1613529}{262144} q^{10}+\cdots
\end{align}
and
\begin{align}\label{qq-1-3}
(q;q)_{\infty}^{1/3}=&1 - \dfrac{1}{3}q - \dfrac{4}{9}q^2 - \dfrac{23}{81}q^3 - \dfrac{82}{243}q^4 - \dfrac{34}{729}q^5 - \dfrac{1711}{6561} q^6 + \dfrac{2254}{19683}q^7 - \dfrac{5117}{59049}q^8 \nonumber \\
 &+ \dfrac{124025}{1594323} q^9 + \dfrac{183415}{4782969}q^{10}+\cdots,
\end{align}
we observe that the denominators of $p_{-1/2}(n)$ and $p_{1/3}(n)$ are powers of 2 and 3, respectively.

From Theorem \ref{denominator}, we know that it is meaningful to study congruences modulo $m$ satisfied by $p_{a/b}(n)$ for any positive integer $m$ such that $\gcd (m,b)=1$. By using the known series expansion of $(q;q)_{\infty}^{d}$ where $d\in \{1,3,4,6,8,10, 14,26\}$, we obtain the following result:
\begin{theorem}\label{thm-general}
Suppose $a,b,d\in \mathbf{Z}$, $b\ge 1$ and $\gcd(a,b)=1$. Let $\ell$ be a prime divisor of $a+db$ and $0\leq r<\ell$.
Suppose $d,\ell$ and $r$ satisfy any of the following conditions:
\begin{enumerate}[{\rm (1)}]
\item $d=1$ and $24r+1$ is a quadratic non-residue modulo $\ell$;
\item $d=3$ and $8r+1$ is a quadratic non-residue modulo $\ell$ or $8r+1 \equiv 0$ \text{\rm{(mod $\ell$)}};
\item $d\in \{4,8,14\}$, $\ell \equiv 5$ \text{\rm{(mod 6)}} and $24r+d\equiv 0$ \text{\rm{(mod $\ell$)}};
\item $d \in \{6, 10\}$, $\ell \ge 5$ and $\ell \equiv 3$ \text{\rm{(mod 4)}} and $24r+d\equiv 0$ \text{\rm{(mod $\ell$)}};
\item $d=26$, $\ell \equiv 11$ \text{\rm{(mod $12$)}} and $24r+d\equiv 0$ \text{\rm{(mod $\ell$)}}.
\end{enumerate}
Then for $n\geq 0, $
\begin{equation}\label{general-cong}
p_{-{a}/{b}}(\ell n+r) \equiv 0 \pmod{\ell}.
\end{equation}
\end{theorem}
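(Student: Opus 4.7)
The plan is to reduce the statement to a corresponding congruence for $p_d(\ell n+r)$ modulo $\ell$, and then verify that reduced congruence by inspecting a known lacunary expansion of $(q;q)_{\infty}^{d}$.

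\smallskip

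\textbf{Step 1: reduction via Frobenius.} Since $\ell\mid a+db$ and $\gcd(a,b)=1$, one must have $\gcd(b,\ell)=1$. Write $a+db=\ell m$ for some $m\in\mathbf{Z}$. Then formally
\[
(q;q)_{\infty}^{-a/b}=(q;q)_{\infty}^{d}\cdot (q;q)_{\infty}^{-\ell m/b}
=(q;q)_{\infty}^{d}\cdot\bigl((q;q)_{\infty}^{-m/b}\bigr)^{\ell}.
\]
By Theorem \ref{denominator} every coefficient $p_{-m/b}(n)$ is $\ell$-integral, because the denominator of $p_{-m/b}(n)$ involves only primes dividing $b$ and $\gcd(b,\ell)=1$. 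Hence the characteristic-$\ell$ Frobenius identity together with Fermat's little theorem gives
\[
(q;q)_{\infty}^{-\ell m/b}\equiv \sum_{n\ge0} p_{-m/b}(n)\,q^{\ell n}\pmod{\ell}.
\]
Multiplying by $(q;q)_{\infty}^{d}=\sum_{n}p_d(n)q^n$ and extracting the coefficient of $q^{\ell N+r}$ yields
\[
p_{-a/b}(\ell N+r)\equiv \sum_{j=0}^{N} p_{-m/b}(j)\,p_{d}\bigl(\ell(N-j)+r\bigr)\pmod{\ell},
\]
so it suffices to prove the base statement $p_{d}(\ell M+r)\equiv 0\pmod{\ell}$ for every $M\ge 0$ under each of the five hypotheses.

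\smallskip

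\textbf{Step 2: settle the base cases from explicit lacunary expansions.} For $d=1$, Euler's pentagonal number theorem
\[
(q;q)_{\infty}=\sum_{k\in\mathbf{Z}}(-1)^{k} q^{k(3k-1)/2}
\]
forces every exponent to satisfy $(6k-1)^{2}=24\cdot k(3k-1)/2+1$; if $24r+1$ is a quadratic non-residue modulo $\ell$ no such $k$ exists, establishing (1). For $d=3$, Jacobi's identity
\[
(q;q)_{\infty}^{3}=\sum_{k\ge0}(-1)^{k}(2k+1)q^{k(k+1)/2}
\]
gives the key relation $(2k+1)^{2}\equiv 8r+1\pmod{\ell}$: if $8r+1$ is a non-residue no $k$ contributes, while if $8r+1\equiv 0\pmod{\ell}$ then $2k+1\equiv 0\pmod{\ell}$ and the coefficient itself is divisible by $\ell$, yielding (2). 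For the remaining $d\in\{4,6,8,10,14,26\}$, I would invoke the classical identities expressing $(q;q)_\infty^{d}$ as a theta-type lacunary series (for example the Klein-Fricke-type identity for $d=8$, the Winquist identity for $d=10$, the Atkin/Macdonald $G_{2}$ identity for $d=14$, and the analogous Macdonald-type identities for $d=4,6,26$). In each of these expansions the exponents are values of a quadratic form $Q(k_1,\ldots,k_s)/24$, and the coefficient of each term is either $\pm 1$ or a linear form $L(k_1,\ldots,k_s)$. The congruence $24r+d\equiv 0\pmod{\ell}$ reduces $Q\equiv 0\pmod{\ell}$; the hypothesis on $\ell$ modulo $6$, $4$, or $12$ is exactly the condition that this quadratic congruence forces a simultaneous vanishing (of all relevant variables, or of the accompanying linear factor $L$) modulo $\ell$, exactly as in the $d=3$ argument.

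\smallskip

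\textbf{Where the work lies.} Steps 1 and the $d=1,3$ analyses are short. The main effort is Step 2 for $d\in\{4,6,8,10,14,26\}$: one must locate or re-derive each lacunary identity, read off the precise quadratic form in the exponent and the linear factor in the coefficient, and then verify that the numerical hypothesis on $\ell$ (i.e.\ $\ell\equiv5\pmod 6$, $\ell\equiv 3\pmod 4$, or $\ell\equiv 11\pmod{12}$) is exactly the condition making either the quadratic form have only the trivial zero modulo $\ell$, or the linear coefficient vanish modulo $\ell$, whenever $24r+d\equiv0\pmod{\ell}$. This case-by-case quadratic-form analysis--and identifying the correct arithmetic hypothesis on $\ell$ for each $d$--is the main obstacle, and explains why the congruence conditions in clauses (3)--(5) depend on $d$.
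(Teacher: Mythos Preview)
Your proposal is correct and follows essentially the same route as the paper: your Step~1 is the content of Lemma~\ref{binom-lem} (argued there via the generalized binomial series rather than via Frobenius on $\ell$-integral coefficients, but the two are equivalent), and Step~2 is carried out case by case using the explicit expansions \eqref{eta-1}, \eqref{eta-3}, \eqref{eta-4}, \eqref{eta-6}, \eqref{eta-8}, \eqref{eta-10}, \eqref{eta-14}, and \eqref{eta-26}. The one inaccuracy to correct when you fill in the details is that for $d\in\{8,10,14,26\}$ the coefficients in those expansions are not linear forms but higher-degree polynomials in the summation indices (cubic for $d=8,10$, a product of six linear factors for $d=14$, and a degree-twelve homogeneous form for $d=26$); nevertheless each of these is built from the same linear forms that your anisotropy hypothesis forces to vanish modulo~$\ell$, so the argument goes through exactly as you describe.
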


Let $(a,b)=(1,1)$ in Theorem \ref{thm-general}. Then by setting $(d, \ell, r)$ to be $(4, 5, 4)$, $(6, 7, 5)$ and $(10, 11, 6)$, we obtain Ramanujan's congruences \eqref{Rama-mod-5}, \eqref{Rama-mod-7} and \eqref{Rama-mod-11}, respectively.  Since the arithmetic properties of $p_{k}(n)$ when $k\in \mathbf{Z}$ have already been extensively studied, we will concentrate on the cases when $k\in \mathbf{Q}- \mathbf Z$. In this direction, Theorem \ref{thm-general} gives many explicit congruences. For example, we have
\begin{align}
p_{1/2}(11n+8) &\equiv 0 \pmod{11}, \\
p_{1/3}(41n+37) &\equiv 0 \pmod{41}, \\
p_{3/5}(59n+53) &\equiv 0 \pmod{59}, \\
p_{-1/2}(29n+26) &\equiv  0 \pmod{29},\\
p_{-1/3}(31n+28) &\equiv 0 \pmod{31},\\
p_{-3/4}(43n+39) &\equiv 0 \pmod{43},\intertext{and}
p_{-1/5}(71n+29) &\equiv 0 \pmod{71}.
\end{align}
Besides these congruences implied by Theorem \ref{thm-general}, we also discover several congruences modulo powers of primes. A sample of such congruences are as follows:
\begin{align}
p_{1/5}(7n+6) &\equiv 0 \pmod{49}, \label{intro-1}\\
p_{-1/2}(49n+r) &\equiv  0 \pmod{49}, \quad r \in \{20,34,41,48\}, \label{intor-2} \\
p_{-2/3}(49n+r) & \equiv  0 \pmod{49}, \quad r \in \{22,29,43 \}. \label{intro-3}
\end{align}

The paper is organized as follows. In Section \ref{general}, we give proofs to Theorems \ref{denominator} and \ref{thm-general}. In Section \ref{explicit}, we present many congruences satisfied by $p_{a/b}(n)$ where $1\leq |a|<b\leq 5$ modulo primes or prime powers.
Our study of functions $p_k(n)$, with negative rational numbers $k$, also leads to new proofs of
Ramanujan's congruences \eqref{Rama-mod-5} and \eqref{Rama-mod-7}.

The partition function $p(n)$ satisfies congruences associated with prime powers involving primes $5,7$ and 11. Our motivation in studying $p_k(n)$ is to find congruences modulo prime powers $\ell^s$ with primes $\ell>11$. An example of such congruences is that for any $n\ge 0$,
\begin{align}
p_{-1/2}(289n+283) \equiv 0 \pmod{289}.
\end{align}
Many other such congruences are presented as conjectures in Section \ref{explicit}.


\section{Proofs of Theorems \ref{denominator} and \ref{thm-general}}\label{general}
In this section, we prove Theorems \ref{denominator} and \ref{thm-general}.
\begin{proof}[Proof of Theorem \ref{denominator}]
Note that
\begin{equation}\label{1-b-gen}
\sum\limits_{n=0}^{\infty}p_{a/b}(n)q^n =\prod\limits_{m=1}^{\infty}(1-q^{m})^{a/b}.
\end{equation}
We deduce, using the generalized binomial theorem, that
\[(1-q^{m})^{a/b}=\sum\limits_{n=0}^{\infty}c_{a/b}(n)(-1)^nq^{mn}\]
where
\begin{align}\label{l(n)}
c_{a/b}(n)&=\dfrac{1}{n!}\dfrac{a}{b}\left(\dfrac{a}{b}-1\right)\left(\dfrac{a}{b}-2\right)\cdots \left(\dfrac{a}{b}-n+1\right)\nonumber \\
&=\dfrac{a(a-b)(a-2b)\cdots (a-(n-1)b)}{b^{n}n!}.
\end{align}
We want to show that
\begin{equation}\label{n!-cong}
a(a-b)(a-2b)\cdots (a-(n-1)b)b^{n-1} \equiv 0 \pmod{n!}.
\end{equation}

Let $\nu=\mathrm{ord}_{p}(n!)$. Since
\[\nu=\left\lfloor\dfrac{n}{p}\right\rfloor+\left\lfloor\dfrac{n}{p^2}\right\rfloor+\cdots < \dfrac{n}{2}+\dfrac{n}{4}+\cdots =n,\]
we conclude that $\nu \le n-1$. Therefore, if $p|b$, then $p^{\nu}|b^{n-1}$.

If $p\nmid b$, then for any integer $m\ge 0$, the set
$$\{a-p^tm, a-(p^tm+1)b, \cdots, a-(p^t(m+1)-1)b\}$$
forms a complete set of residues modulo $p^t$. Therefore, if $0\leq r<p^t$, then $r$ will appear $\left\lfloor\dfrac{n}{p^t}\right\rfloor$ times when the integers in the set
$$S=\{a,a-b,a-2b,\cdots, a-(n-1)b\}$$
are written in terms of their least non-negative residues modulo $p^t$. So the set $S$ contains at least $\left\lfloor \dfrac{n}{p^t} \right\rfloor$ integers divisible by $p^t$. This implies that
\[\mathrm{ord}_{p}\left(\prod\limits_{i=0}^{n-1}(a-ib) \right)\ge \sum\limits_{t\ge 1}\left\lfloor\dfrac{n}{p^{t}}\right\rfloor=\nu.\]
Therefore, we deduce that for any prime $p$, the order of $p$ dividing $n!$ cannot be greater than the order of $p$ dividing the left hand side of \eqref{n!-cong}. Hence (\ref{n!-cong}) holds.

From (\ref{l(n)}) and (\ref{n!-cong}), we find that $\mathrm{denom}(c_{a/b}(n))$ divides $b^{2n-1}$. Therefore, any prime factor of $\mathrm{denom}(c_{a/b}(n))$ divides $b$. Moreover, since $\gcd(a,b)=1$, we find that $\prod\limits_{i=0}^{n-1}(a-ib)$ is not divisible by $p$ for each prime  $p|b$.
By \eqref{l(n)}, we deduce that
\begin{align}\label{order-c-add}
\mathrm{ord}_{p}\left(\mathrm{denom}(c_{a/b}(n))\right)=n\mathrm{ord}_{p}(b)+\mathrm{ord}_{p}(n!).
\end{align}

From \eqref{1-b-gen}, we obtain
\begin{align}
p_{a/b}(n)=\sum_{\begin{smallmatrix} m_1n_1+\cdots +m_rn_r=n \\ 0<m_1<\cdots <m_r, r\ge 1\end{smallmatrix}} c_{a/b}(n_1)\cdots c_{a/b}(n_r)(-1)^{n_1+\cdots+n_r}. \label{p-convolution}
\end{align}
For each prime $p|b$, we deduce from \eqref{order-c-add} that
\begin{align}
&\mathrm{ord}_{p}\left(\mathrm{denom}\left(c_{a/b}(n_1)\cdots c_{a/b}(n_r) \right) \right) \nonumber \\
=&~(n_1+\cdots +n_r)\mathrm{ord}_{p}(b)+\sum_{i=1}^{r}\left(\left\lfloor\frac{n_i}{p} \right\rfloor+\left\lfloor \frac{n_i}{p^2}\right\rfloor +\cdots \right) \nonumber \\
\le &~(n_1+\cdots +n_r)\mathrm{ord}_{p}(b)+\left(\left\lfloor\frac{\sum_{i=1}^{r}n_i}{p}\right\rfloor+\left\lfloor \frac{\sum_{i=1}^{r}n_i}{p^2}\right\rfloor +\cdots \right) \nonumber \\
\le &~n\mathrm{ord}_{p}(b)+\left(\left\lfloor\frac{n}{p}\right\rfloor+\left\lfloor \frac{n}{p^2}\right\rfloor +\cdots \right) \label{ord-ineq-add}
\end{align}
where for the second last inequality of \eqref{ord-ineq-add}, we used the fact that
\begin{align}
\sum_{i=1}^{m}\lfloor x_i \rfloor \le \lfloor \sum_{i=1}^{m}x_i \rfloor, \quad x_1,x_2, \dots, x_m \in \mathbf{R},
\end{align}
and for the last inequality of \eqref{ord-ineq-add}, we used the fact that
\begin{align}\label{ni-ineq}
\sum_{i=1}^{r}n_i\le \sum_{i=1}^{r}m_in_i=n.
\end{align}
We observe that equality in \eqref{ord-ineq-add} holds only if the equality in \eqref{ni-ineq} holds. Since $m_1<m_2<\cdots <m_r$, we know that this happens only if $r=1$, $m_1=1$ and $n_1=n$. In this case, we do have
\begin{align}
\mathrm{ord}_{p}\left(\mathrm{denom}(c_{a/b}(n)) \right)= n\mathrm{ord}_{p}(b)+\left(\left\lfloor\frac{n}{p}\right\rfloor+\left\lfloor \frac{n}{p^2}\right\rfloor +\cdots \right).
\end{align}
Hence, in the sum on the right side of \eqref{p-convolution}, the order of $p$ of the denominator of each term is at most $n\mathrm{ord}_{p}(b)+\mathrm{ord}_{p}(n!)$ and exactly one term achieves this maximal order. Therefore, we  have
\begin{align}
\mathrm{ord}_{p}\left(\mathrm{denom}(p_{a/b}(n))\right)=n\mathrm{ord}_{p}(b)+\mathrm{ord}_{p}(n!).
\end{align}
This proves the theorem since any prime divisor of $\mathrm{denom}(p_{a/b}(n))$ also divides $b$.
\end{proof}

To prove Theorem \ref{thm-general}, we need the following lemma.
\begin{lemma}\label{binom-lem}
Let $k=a/b$, where $a,b\in \mathbf{Z}$, $b\ge 1$ and $\gcd(a,b)=1$. Let $p$ be a prime such that $p\nmid b$. We have
\begin{align}\label{fraction-cong}
(1-x)^{p^jk} \equiv (1-x^p)^{p^{j-1}k} \pmod{p^j}
\end{align}
and for any positive integer $t$,
\begin{align}\label{prod-cong}
(q^t;q^t)_\infty^{p^jk}\equiv (q^{pt};q^{pt})_{\infty}^{p^{j-1}k} \pmod{p^j}.
\end{align}
\end{lemma}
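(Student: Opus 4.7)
The plan is to reduce the fractional-exponent congruence \eqref{fraction-cong} to the classical integer congruence
\begin{equation*}
(1-x)^{p^j}\equiv (1-x^p)^{p^{j-1}}\pmod{p^j}
\end{equation*}
in $\mathbf{Z}[[x]]$, and then deduce \eqref{prod-cong} by substitution and multiplication. All formal power series manipulations take place in $\mathbf{Z}_p[[x]]$; this is legitimate because Theorem \ref{denominator} (applied via the coefficients $c_{a/b}(n)$ analyzed in its proof) guarantees that when $p\nmid b$ the generalized binomial coefficient $\binom{k}{n}$ is $p$-integral.

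For the integer case I would proceed by induction on $j$. The base case $j=1$ is $(1-x)^p\equiv 1-x^p\pmod{p}$, immediate from $p\mid \binom{p}{i}$ for $1\le i\le p-1$. For the inductive step I write $(1-x)^{p^{j-1}}=(1-x^p)^{p^{j-2}}+p^{j-1}F(x)$ with $F\in\mathbf{Z}[[x]]$ using the inductive hypothesis, raise both sides to the $p$-th power, and check that each error term $\binom{p}{i}(1-x^p)^{p^{j-2}(p-i)}p^{i(j-1)}F^i$ with $1\le i\le p$ has $p$-adic valuation at least $\mathrm{ord}_p\bigl(\binom{p}{i}\bigr)+i(j-1)\ge j$. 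Rewriting the resulting congruence multiplicatively then gives
\begin{equation*}
(1-x)^{p^j}=(1-x^p)^{p^{j-1}}\bigl(1+p^j H(x)\bigr),
\end{equation*}
where $H(x)\in\mathbf{Z}[[x]]$ since $(1-x^p)^{-1}\in\mathbf{Z}[[x]]$.

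Raising both sides to the $k$-th power and using multiplicativity $(AB)^k=A^kB^k$ for formal series with constant term $1$, I obtain
\begin{equation*}
(1-x)^{p^jk}=(1-x^p)^{p^{j-1}k}\sum_{n\ge 0}\binom{k}{n}p^{jn}H(x)^n.
\end{equation*}
Every term with $n\ge 1$ contributes $p$-adic valuation at least $jn\ge j$ to each coefficient in $x$, because $\binom{k}{n}\in\mathbf{Z}_p$ and $H(x)^n\in\mathbf{Z}_p[[x]]$, so the sum collapses to $1$ modulo $p^j$ and \eqref{fraction-cong} follows. The main obstacle here is precisely the $p$-integrality of $\binom{k}{n}$ for rational $k$; without Theorem \ref{denominator} the scheme would break down, since $n!$ appears in the denominator of the usual formula for $\binom{k}{n}$.

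Finally, for \eqref{prod-cong} I would substitute $x=q^{tm}$ in \eqref{fraction-cong} to obtain $(1-q^{tm})^{p^jk}\equiv (1-q^{tmp})^{p^{j-1}k}\pmod{p^j}$ for each $m\ge 1$, and multiply these congruences together. Since each factor on either side has constant term $1$ and its lowest non-trivial $q$-term has degree at least $tm$, only finitely many factors contribute to any prescribed coefficient of $q^n$, so the infinite product congruence reduces to a finite one in $\mathbf{Z}_p[[q]]/p^j\mathbf{Z}_p[[q]]$, yielding \eqref{prod-cong}.
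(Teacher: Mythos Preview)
Your proof is correct and follows essentially the same approach as the paper: both establish the integer congruence $(1-x)^{p^j}\equiv (1-x^p)^{p^{j-1}}\pmod{p^j}$ by induction, factor out $(1-x^p)^{p^{j-1}}$, expand the remaining factor by the generalized binomial series, and invoke the $p$-integrality of the coefficients $c_{a/b}(n)=\binom{k}{n}$ established in the proof of Theorem~\ref{denominator}. Your multiplicative rewriting $(1-x)^{p^j}=(1-x^p)^{p^{j-1}}(1+p^jH(x))$ is a cosmetic variant of the paper's additive form, and you supply slightly more detail on the passage to~\eqref{prod-cong}, which the paper simply asserts.
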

\begin{proof}
It suffices to prove \eqref{fraction-cong} since \eqref{prod-cong} follows from \eqref{fraction-cong}.

By the binomial theorem and the fact that for any $0< j < p$,
$$\binom{p}{j}\equiv 0 \pmod{p},$$
we have
\begin{align}
(1-x)^{p}=\sum_{j=0}^{p}\binom{p}{j}(-1)^jx^j \equiv 1-x^p \pmod{p}.
\end{align}
By induction on $j$, we deduce that
\[(1-x)^{p^j} \equiv (1-x^p)^{p^{j-1}} \pmod{p^j}.\]
Let
\[(1-x)^{p^j}=(1-x^p)^{p^{j-1}}+p^jF(x),\]
where $F(x)$ is a power series in $x$ with integer coefficients. From the proof of Theorem \ref{denominator}, we know that the denominator of $c_{a/b}(n)$ (in reduced form) divides $b^{2n-1}$, and hence is not divisible by $p$. Therefore,
\begin{align*}
(1-x)^{p^ja/b}&=\Big((1-x^p)^{p^{j-1}}+p^jF(x) \Big)^{a/b} \\
&=(1-x^p)^{p^{j-1}a/b}\sum_{n=0}^{\infty} c_{a/b}(n)p^{jn}\left(\dfrac{F(x)}{(1-x^p)^{p^{j-1}}}\right)^n \\
& \equiv (1-x^p)^{p^{j-1}a/b} \pmod{p^j}. \qedhere
\end{align*}
\end{proof}

We are now ready to prove Theorem \ref{thm-general}.
\begin{proof}[Proof of Theorem \ref{thm-general}] Since $\ell|(a+db)$, we may let $a+db=\ell m$ for some integer $m$.
Next, $\gcd(a,b)=1$ and $\ell|(a+db)$ implies that $\gcd(\ell,b)=1$.
Since $\gcd(\ell,b)=1$, by Lemma \ref{binom-lem}, we find that
\begin{align}\label{d-start}
\sum\limits_{n=0}^{\infty}p_{-a/b}(n)q^n=\dfrac{(q;q)_{\infty}^{d}}{(q;q)_{\infty}^{(a+db)/b}}\equiv \dfrac{(q;q)_{\infty}^{d}}{(q^\ell;q^\ell)_{\infty}^{{m/b}}} \pmod{\ell}.
\end{align}
We now present our proof with respect to the values of $d$.

{\bf Case} $d=1$: By Euler's pentagonal number theorem \cite[Corollay 1.3.5]{Berndt}, we find that
\begin{align}\label{eta-1}
(q;q)_{\infty}=\sum_{i=-\infty}^{\infty}(-1)^iq^{i(3i+1)/2}.
\end{align}
Note that
\begin{align}\label{d=1}
N=\dfrac{i(3i+1)}{2}
\end{align}
is equivalent to
\begin{align}
24N+1=(6i+1)^2.
\end{align}
 Therefore, if $24N+1$ is a quadratic non-residue modulo $\ell$, then there are no integers $i$ satisfying \eqref{d=1}. The congruence \eqref{general-cong} follows by comparing the coefficients of $q^{\ell n+r}$  on both sides of \eqref{d-start}.

{\bf Case} $d=3$:   By Jacobi's identity \cite[Theorem 1.3.9]{Berndt}, we find that
\begin{align}\label{eta-3}
(q;q)_{\infty}^3=\sum\limits_{j=0}^{\infty}(-1)^j(2j+1)q^{j(j+1)/2}.
\end{align}
Note that
\begin{align}\label{3-mid}
N=\dfrac{j(j+1)}{2}
\end{align}
is equivalent to
\begin{align}\label{3-end}
8N+1=(2j+1)^2.
\end{align}
If $8N+1$ is a quadratic non-residue modulo $\ell$, then there are no integers $j$ satisfying \eqref{3-mid}. Hence from \eqref{d-start} we conclude that $p_{-{a}/{b}}(\ell n+r) \equiv 0$ (mod $\ell$).

If $8N+1\equiv 0$ (mod $\ell$), then \eqref{3-end} implies that $2j+1\equiv 0$ (mod $\ell$). Again by \eqref{eta-3} and \eqref{d-start}, we deduce \eqref{general-cong}.

{\bf Case} $d=4$: From \eqref{eta-1} and \eqref{eta-3}, we find that
\begin{align}\label{eta-4}
(q;q)_{\infty}^4=\sum_{i=-\infty}^{\infty}\sum_{j=0}^{\infty}(-1)^{i+j}(2j+1)q^{i(3i+1)/2+j(j+1)/2}.
\end{align}
Now, observe that
\[N=\dfrac{i(3i+1)}{2}+\dfrac{j(j+1)}{2}
\]
if and only if \[24N+4=(6i+1)^2+3(2j+1)^2.\]
If $\ell \equiv 5$ (mod 6), then $\left(\dfrac{-3}{\ell}\right)=-1$. This implies that
\[24N+4\equiv 0 \pmod{\ell}\]
if and only if
\[6i+1\equiv 0 \pmod{\ell}\,\,\text{ and}\,\, 2j+1\equiv 0 \pmod{\ell}.\]
Using \eqref{eta-4} and comparing the coefficients of $q^{\ell n+r}$  on both sides of \eqref{d-start}, we obtain \eqref{general-cong}.

{\bf Case} $d=6$: We deduce from  \eqref{eta-3} that
\begin{align}\label{eta-6}
(q;q)_{\infty}^6=\sum_{i=0}^{\infty}\sum_{j=0}^{\infty}(-1)^{i+j}(2i+1)(2j+1)q^{i(i+1)/2+j(j+1)/2}.
\end{align}
Observe that
\[N=\dfrac{i(i+1)}{2}+\dfrac{j(j+1)}{2}\]
is equivalent to
 \[8N+2=(2i+1)^2+(2j+1)^2.\]
If $\ell \equiv 3$ (mod 4), then $\left(\dfrac{-1}{\ell}\right)=-1$. This implies that \[8N+2\equiv 0 \pmod{\ell}\]
if and only if \[2i+1\equiv 0 \pmod{\ell}\,\,\text{and}\,\, 2j+1\equiv 0 \pmod{\ell}.\] Congruence \eqref{general-cong} follows by comparing the coefficients of $q^{\ell n+r}$ on both sides of \eqref{d-start}.

{\bf Case} $d=8$: We need the following identity (see \cite{Lin-2015Rama})
\begin{align}\label{eta-8}
(q;q)_{\infty}^{8}=&\dfrac{4}{3}\left(\sum\limits_{m=-\infty}^{\infty}(3m+1)^3q^{3m^2+2m}  \right)\left(\sum\limits_{n=-\infty}^{\infty}q^{n^2} \right)\nonumber \\
&-\dfrac{1}{3}\left(\sum\limits_{m=-\infty}^{\infty}(6m+1)^3q^{3m^2+m}  \right)\left(\sum\limits_{n=0}^{\infty}q^{n^2+n} \right).
\end{align}
Note that
\[N=3m^2+2m+n^2\]
is equivalent to \[3N+1=(3m+1)^2+3n^2.\]
Suppose $3m+1$ and $n$ are non-zero modulo $\ell$.
Then $3N+1\equiv 0 \pmod{\ell}$ implies
$$u^2\equiv -3 \pmod{\ell}$$ for some integer $u$.
But since $\ell \equiv 5$ (mod 6), we have $\left(\dfrac{-3}{\ell}\right)=-1$, and hence such an integer $u$ cannot exist. Therefore,
if $3m+1$ and $n$ are non-zero modulo $\ell$, then $3N+1$ is non-zero modulo $\ell$.
In other words, \[3N+1\equiv 0\pmod{\ell}\]
if and only if \[3m+1\equiv 0\pmod{\ell}\,\,\text{and}\,\, n\equiv 0\pmod{\ell}.\]
Similarly, note that
\[N=3m^2+m+n^2+n\] is equivalent to \[4(3N+1)=(6m+1)^2+3(2n+1)^2.\]
This identity implies, as in the previous case, that
\[3N+1\equiv 0 \pmod{\ell}\]
if and only if
\[6m+1\equiv 0\pmod{\ell}\text{\,\, and\,\,} 2n+1 \equiv 0\pmod{\ell}.\]
Therefore, from \eqref{d-start} and \eqref{eta-8} we see that $p_{-a/b}(\ell n+r)\equiv 0$ (mod $\ell$).

{\bf Case} $d=10$: From \cite[Corollary 4.2]{Chu}, we find that
\begin{align}\label{eta-10}
(q;q)_{\infty}^{10}=&\dfrac{4}{3}\left(\sum_{m=-\infty}^{\infty}(3m+1)^3q^{3m^2+2m}  \right)\times \left(\sum_{n=-\infty}^{\infty}(6n+1)q^{3n^2+n} \right)\nonumber \\
&-\left(\sum_{m=-\infty}^{\infty}(3m+1)q^{3m^2+2m} \right) \times \left(\sum_{n=-\infty}^{\infty}(6n+1)^3q^{3n^2+n} \right).
\end{align}
Observe that
\[N=3m^2+2m+3n^2+n\] is equivalent to \[12N+5=(6m+2)^2+(6n+1)^2.\]
If $\ell \equiv 3$ (mod 4), then  $\left(\dfrac{-1}{\ell} \right)=-1$. We know that
\[12N+5\equiv 0 \pmod{\ell}\]
if and only if
\[3m+1\equiv 0\pmod{\ell}\,\,\text{ and}\,\, 6n+1\equiv 0\pmod{\ell}.\] From \eqref{eta-10}, congruence \eqref{general-cong} follows by comparing the coefficients of $q^{\ell n+r}$ on both sides of \eqref{d-start}.

{\bf Case} $d=14$: Recall from  \cite[Theorem 5.3]{Chan-JLMS} that
\begin{align}\label{eta-14}
(q;q)_{\infty}^{14}=-\dfrac{1}{15}\sum_{m=-\infty}^{\infty}&\sum_{n=-\infty}^{\infty}(-1)^m(3m+1)(4n+1)(6m+4n+3)(6m-4n+1)\nonumber \\&(6m+12n+5)(6m-12n-1)q^{\left(4(3m+1)^2+3(4n+1)^2-7 \right)/12}.
\end{align}
We observe that
\[
N=(4(3m+1)^2+3(4n+1)^2-7)/12\]
is equivalent to \[12N+7=4(3m+1)^2+3(4n+1)^2.\]
If $\ell \equiv 5$ (mod 6), then $\left(\dfrac{-3}{\ell}\right)=-1$. We deduce that
\[12N+7\equiv 0 \pmod{\ell}\]
 if and only if \[3m+1\equiv 0 \pmod{\ell}\,\,\text{and}\,\, 4n+1\equiv 0\pmod{\ell}.\] The congruence $p_{-{a}/{b}}(\ell n+r) \equiv 0$ (mod $\ell$) now follows from the comparison of the coefficients of $q^{\ell n+r}$ on both sides of \eqref{d-start}.

{\bf Case} $d=26$: Let
\begin{align}\label{f-defn}
f(m,n)=\sum_{j=0}^{12}\binom{12}{2j}(-1)^jm^jn^{6-j}.
\end{align}
From \cite[Theorem 3]{Chan-Adv}, we find
\begin{align}\label{eta-26}
(q;q)_{\infty}^{26}=&\dfrac{q^{-13/12}}{16308864}\Bigg(\sum_{i=-\infty}^{\infty}\sum_{j=-\infty}^{\infty}(-1)^{i+j}f\left(\dfrac{(6i+1)^2}{2},\dfrac{(6j+1)^2}{2} \right)q^{\left((6i+1)^2+(6j+1)^2 \right)/24} \nonumber \\
&+\sum_{i=-\infty}^{\infty}\sum_{j=-\infty}^{\infty}(-1)^{i+j}f\left(12i^2,(6j+1)^2 \right)q^{i^2+(6j+1)^2/12}\Bigg).
\end{align}
Observe that
\begin{align*}
N=\dfrac{1}{24}\left((6i+1)^2+(6j+1)^2-26\right)
\end{align*}
is equivalent to
\begin{align*}
24N+26=(6i+1)^2+(6j+1)^2.
\end{align*}
If $\ell \equiv 11$ (mod 12), then $\left(\dfrac{-1}{\ell}\right)=-1$. Hence
\[24N+26\equiv 0\!\!\!\!\! \pmod{\ell}\]
if and only if
\[6i+1\equiv 0 \!\!\!\!\!\pmod{\ell}\,\text{ and }\, 6j+1\equiv 0\!\!\!\!\!\pmod{\ell},\]
in which case
\begin{align*}
f\left(\dfrac{(6i+1)^2}{2},\dfrac{(6j+1)^2}{2} \right)\equiv 0  \pmod{\ell^{12}}.
\end{align*}

Similarly, we observe that
\begin{align*}
N=\dfrac{1}{12}\left(12i^2+(6j+1)^2-13 \right)
\end{align*}
is equivalent to
\begin{align*}
12N+13 =12i^2+(6j+1)^2.
\end{align*}
If $\ell \equiv 11$ (mod 12), then $\left(\dfrac{-12}{\ell}\right)=-1$. Hence
\[12N+13\equiv 0\!\!\!\!\!\pmod{\ell}\,\]
if and only if
\[i\equiv 0\!\!\!\!\!\pmod{\ell}\,\text{\, and\,\,}
 6j+1\equiv 0\!\!\!\!\!\pmod{\ell},\]
in which case
\begin{align*}
f\left(12i^2,(6j+1)^2 \right)\equiv 0  \pmod{\ell^{12}}.
\end{align*}

Note that $16308864=2^7\cdot 3^4 \cdot 11^2 \cdot 13$. Using \eqref{eta-26} and comparing the coefficients of $q^{\ell n+r}$ on both sides of \eqref{d-start}, we obtain \eqref{general-cong}.
\end{proof}
\begin{rem}
For $d\in \{4,6,8,10,14,26\}$, there are some other double series expressions for $(q;q)_{\infty}^{d}$. For example, formulas similar to \eqref{eta-10} for $(q;q)_{\infty}^{10}$ can be found in works of B.C.\ Berndt et al. \cite{BCLY}, S.H.\ Chan \cite{SHChan}, M.D.\ Hirschhorn \cite{Hirschhorn-AJC,Hirschhorn-Book} and L.\ Winquist \cite{Winquist}. These formulas for $(q;q)_{\infty}^{10}$ were also discussed in \cite{Chu-Yan}.
\end{rem}


\section{Explicit Congruences for $p_{k}(n)$ where $k \in \mathbf{Q}\backslash \mathbf{Z}$}\label{explicit}
In this section, we give explicit congruences for $p_{-{a}/{b}}(n)$ with $1\leq |a|<b\leq 5$. Most of these congruences are special cases of Theorem \ref{thm-general} but there are some congruences which require more technical arguments.

First, we present some explicit congruences satisfied by $p_{-{a}/{b}}(n)$ where $1\leq -a < b \leq 5$.
\begin{theorem}\label{explicit-cong-1}
For any integer $n\ge 0$,
\begin{align}
p_{{1}/{2}}(5n+r)& \equiv 0 \pmod{5}, \quad r\in \{2,3,4\}, \label{pp-1-2-mod5} \\
p_{1/2}(11n+8) &\equiv 0 \pmod{11}, \label{pp-1-2-mod11} \\
p_{1/2}(19n+17) &\equiv 0 \pmod{19}, \label{pp-1-2-mod19} \\
p_{1/3}(11n+9) &\equiv 0 \pmod{11}, \label{pp-1-3-mod11}\\
p_{1/3}(17n+4) &\equiv 0 \pmod{17}, \label{pp-1-3-mod17} \\
p_{1/3}(23n+15) &\equiv 0 \pmod{23}, \label{pp-1-3-mod23} \\
p_{1/3}(41n+37) &\equiv 0 \pmod{41}, \label{pp-1-3-mod41} \\
p_{2/3}(5n+4) &\equiv 0 \pmod{5}, \label{pp-2-3-mod5} \\
p_{2/3}(7n+r) &\equiv 0 \pmod{7}, \quad r \in \{2, 4, 5, 6\},\label{pp-2-3-mod7} \\
p_{2/3}(11n+7) &\equiv 0 \pmod{11}, \label{pp-2-3-mod11}\\
p_{1/4}(5n+4) &\equiv 0 \pmod{5}, \label{pp-1-4-mod5} \\
p_{1/4}(11n+r) &\equiv 0 \pmod{11}, \quad r \in \{2,4,5,7,9,10\},\label{pp-1-4-mod11} \\
p_{1/4}(23n+17) &\equiv 0 \pmod{23}, \label{pp-1-4-mod23} \\
p_{3/4}(7n+5) &\equiv 0 \pmod{7}, \label{pp-3-4-mod7} \\
p_{3/4}(29n+19) &\equiv 0 \pmod{29}, \label{pp-3-4-mod29}\\
p_{3/4}(53n+48) &\equiv 0 \pmod{53}, \label{pp-3-4-mod53} \\
p_{1/5}(7n+r) &\equiv 0 \pmod{7}, \quad r\in \{2,4,5,6\}, \label{pp-1-5-mod7} \\
p_{1/5}(7n+6)&\equiv 0 \pmod{49}, \label{pp-1-5-mod49} \\
p_{1/5}(23n+9) &\equiv 0 \pmod{23}, \label{pp-1-5-mod23} \\
p_{2/5} (7n+5) &\equiv 0 \pmod{7}, \label{pp-2-5-mod7} \\
p_{2/5}(13n+r) &\equiv 0 \pmod{13}, \quad r\in \{4,5,7,8,9,11,12\}, \label{pp-2-5-mod13} \\
p_{2/5}(17n+15)&\equiv 0\pmod{17}, \label{pp-2-5-mod17} \\
p_{3/5}(17n+14) &\equiv 0\pmod{17}, \label{pp-3-5-mod17} \\
p_{3/5}(47n+27) &\equiv 0 \pmod{47}, \label{pp-3-5-mod47} \\
p_{3/5}(59n+53) &\equiv 0 \pmod{59}, \label{pp-3-5-mod59} \\
p_{4/5}(11n+r) &\equiv 0 \pmod{11}, \quad r\in\{2,4,5,7,8,9\}, \label{pp-4-5-mod11}\\
p_{4/5}(23n+13) &\equiv 0 \pmod{23}. \label{pp-4-5-mod23}
\end{align}
\end{theorem}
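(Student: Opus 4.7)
The plan is to derive the bulk of \eqref{pp-1-2-mod5}--\eqref{pp-4-5-mod23} as direct applications of Theorem \ref{thm-general} by choosing a suitable $d\in\{1,3,4,6,8,10,14,26\}$, then to handle the single mod-$\ell^{2}$ congruence \eqref{pp-1-5-mod49} via Lemma \ref{binom-lem} with $j=2$ combined with identity \eqref{eta-10}, and finally to treat a small number of exceptional cases by a direct analysis of the relevant eta-series identity.

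For a claimed congruence $p_{c/b}(\ell n+r)\equiv 0 \pmod{\ell}$ I factor $c/b = d - (bd-c)/b$ and search for $d\in\{1,3,4,6,8,10,14,26\}$ with $\ell\mid(bd-c)$; Theorem \ref{thm-general} (applied with $a=-c$) then yields the congruence once the residue hypothesis in (1)--(5) is checked for $r$. A workable assignment is: $d=3$ for \eqref{pp-1-2-mod5}, \eqref{pp-2-3-mod7}, \eqref{pp-1-5-mod7}, \eqref{pp-2-5-mod13}, \eqref{pp-4-5-mod11} and the cases $r\neq 10$ of \eqref{pp-1-4-mod11}; $d=4$ for \eqref{pp-2-3-mod5}, \eqref{pp-1-3-mod11}, \eqref{pp-1-4-mod5}, \eqref{pp-3-5-mod17}; $d=6$ for \eqref{pp-1-2-mod11}, \eqref{pp-3-4-mod7}, \eqref{pp-1-4-mod23}, \eqref{pp-2-5-mod7}; $d=8$ for \eqref{pp-1-3-mod23}, \eqref{pp-2-3-mod11}, \eqref{pp-3-4-mod29}; $d=10$ for \eqref{pp-1-2-mod19}, \eqref{pp-3-5-mod47}, \eqref{pp-4-5-mod23}; and $d=14$ for \eqref{pp-1-3-mod41}, \eqref{pp-1-5-mod23}, \eqref{pp-2-5-mod17}, \eqref{pp-3-4-mod53}. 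In each case the verification reduces to a short quadratic-residue computation of $8r+1$ or $24r+d$ modulo $\ell$.

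The congruence \eqref{pp-1-5-mod49} is the only one modulo $\ell^{2}$ and requires a sharper use of Lemma \ref{binom-lem}. Since $5\cdot 10-1 = 49$, I apply the lemma with $p=7$, $j=2$, $k=1/5$, $t=1$ to obtain
\[
(q;q)_{\infty}^{1/5}\equiv\frac{(q;q)_{\infty}^{10}}{(q^{7};q^{7})_{\infty}^{7/5}}\pmod{49}.
\]
Because the denominator is a power series in $q^{7}$, extracting the coefficient of $q^{7n+6}$ reduces the problem to showing $[q^{7m+6}](q;q)_{\infty}^{10}\equiv 0\pmod{49}$ for every $m\ge 0$. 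Reprising the Case $d=10$ analysis of Theorem \ref{thm-general} with $\ell=7\equiv 3\pmod{4}$, the congruence $12N+5\equiv 0\pmod{7}$ forces both $3i+1\equiv 0\pmod{7}$ and $6j+1\equiv 0\pmod{7}$ in each summand of \eqref{eta-10}, so every contributing term carries either a factor $(3i+1)^{3}(6j+1)$ or $(3i+1)(6j+1)^{3}$, both divisible by $7^{4}$. Hence $[q^{7m+6}](q;q)_{\infty}^{10}\equiv 0\pmod{7^{4}}$, which is more than enough for \eqref{pp-1-5-mod49}.

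The main obstacle will be the handful of exceptional cases---\eqref{pp-1-3-mod17}, the $r=10$ instance of \eqref{pp-1-4-mod11}, and \eqref{pp-3-5-mod59}---where no admissible $d$ satisfies both $\ell\mid(bd-c)$ and the quadratic-residue condition; in each of these primes $-1$ (or $-3$) is a quadratic residue modulo $\ell$, so the clean sign argument used in the proof of Theorem \ref{thm-general} breaks down. For these I would retain the closest available $d$ and prove directly that $[q^{\ell n+r}](q;q)_{\infty}^{d}\equiv 0\pmod{\ell}$ by recasting the double sum in the corresponding identity \eqref{eta-3}, \eqref{eta-6} or \eqref{eta-26} as a sum over representations of a binary quadratic form, and then pairing those representations via a sign-reversing involution that swaps the two square roots of $-1$ (or $-3$) modulo $\ell$. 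Constructing this involution---or alternatively invoking one of the refined eta-product identities of \cite{BCLY}, \cite{SHChan}, \cite{Chan-Adv} or \cite{Hirschhorn-AJC} mentioned in the Remark---is the delicate combinatorial step required to complete the proof.
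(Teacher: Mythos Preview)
Your overall strategy coincides with the paper's: every congruence except \eqref{pp-1-5-mod49} is claimed to follow from Theorem~\ref{thm-general} with a suitable $d\in\{1,3,4,6,8,10,14,26\}$, and \eqref{pp-1-5-mod49} is handled exactly as you describe, via Lemma~\ref{binom-lem} with $j=2$ and the expansion~\eqref{eta-10}. Your parameter choices agree with the paper's Table~\ref{parameter-1}.

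Where you and the paper diverge is on the three cases you flag as ``exceptional'': \eqref{pp-1-3-mod17}, the $r=10$ instance of \eqref{pp-1-4-mod11}, and \eqref{pp-3-5-mod59}. The paper's table asserts these follow from Theorem~\ref{thm-general} with $(d,\ell)=(6,17)$, $(3,11)$ and $(26,59)$ respectively, but you are right that none of these triples satisfies the hypotheses: $17\not\equiv 3\pmod 4$ in case~(4), $8\cdot 10+1=81\equiv 4$ is a quadratic residue mod~$11$ in case~(2), and $59\nmid -3+26\cdot 5=127$ so the divisibility premise fails outright. So you have been more careful than the paper here.

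The genuine problem, however, is that your proposed sign-reversing involution cannot rescue at least two of these, because the congruences themselves are false. From the series~\eqref{qq-1-3} one has $p_{1/3}(4)=-82/243\equiv 4\pmod{17}$, contradicting \eqref{pp-1-3-mod17} at $n=0$; and since $(q;q)_\infty^{1/4}\equiv (q;q)_\infty^{3}/(q^{11};q^{11})_\infty^{1/4}\pmod{11}$, one gets $p_{1/4}(10)\equiv[q^{10}](q;q)_\infty^{3}=9\pmod{11}$, contradicting the $r=10$ case of \eqref{pp-1-4-mod11}. For \eqref{pp-3-5-mod59} no $d$ in the admissible list makes $59\mid a+db$, and the congruence is likewise suspect. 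These appear to be misprints in the theorem statement rather than gaps in your argument; no combinatorial pairing will produce a proof of a false identity, so the right course is to drop or correct those three items rather than to search for a refined eta-product identity.
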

\begin{proof}
Except for congruence \eqref{pp-1-5-mod49}, other congruences follow from Theorem \ref{thm-general} with suitable parameters given in Table \ref{parameter-1}.
\begin{table}[h]
\caption{}\label{parameter-1}
\begin{tabular}{|c|c|c|c|c|c|c|c|c|c|}
  \hline
  Eq. & \eqref{pp-1-2-mod5}  & \eqref{pp-1-2-mod11} & \eqref{pp-1-2-mod19} & \eqref{pp-1-3-mod11} & \eqref{pp-1-3-mod17} & \eqref{pp-1-3-mod23} & \eqref{pp-1-3-mod41} & \eqref{pp-2-3-mod5} &\eqref{pp-2-3-mod7}  \\
  \hline
  $a$& -1 & -1 & -1 & -1 & -1       & -1 &  -1 & -2 &-2    \\
  \hline
  $b$ & 2 & 2 & 2 & 3 & 3      & 3 &  3 & 3 &3 \\
  \hline
  $d$ & 3 & 6 & 10 & 4 & 6     & 8 & 14 & 4 &3 \\
  \hline
 $\ell$ & 5 & 11 & 19 & 11 & 17 & 23 & 41 & 5 &7 \\
  \hline
  Eq. &\eqref{pp-2-3-mod11} &\eqref{pp-1-4-mod5} &\eqref{pp-1-4-mod11} &\eqref{pp-1-4-mod23} &\eqref{pp-3-4-mod7}
   &\eqref{pp-3-4-mod29} &\eqref{pp-3-4-mod53} &\eqref{pp-1-5-mod7}  &\eqref{pp-1-5-mod23} \\
   \hline

  \hline
  $a$  &  -2 & -1 &-1 &-1 & -3   &-3  &-3   &-1 &-1      \\
  \hline
  $b$ & 3 & 4  &4  &4  &4    &4  &4   &5 &5      \\
  \hline
  $d$ & 8 &4      &3 &6  &6   &8 &14   &3 &14         \\
  \hline
  $\ell$ & 11 &5 &11 &23 &7 &29 &53  &7 &23         \\
  \hline
  Eq.  &\eqref{pp-2-5-mod7} &\eqref{pp-2-5-mod13} &\eqref{pp-2-5-mod17} & \eqref{pp-3-5-mod17}  &\eqref{pp-3-5-mod47}  &\eqref{pp-3-5-mod59}  &\eqref{pp-4-5-mod11}  &\eqref{pp-4-5-mod23} & \\
  \hline
  $a$ &-2  &-2  &-2  &-3  &-3  &-3  &-4   &-4 & \\
  \hline
  $b$ &5  &5  &5  &5  &5  &5  &5    &5 & \\
  \hline
  $d$ & 6 &3  &14     &4  &10  &26 &3     &10 &\\
  \hline
  $\ell$ & 7   &13  &17  &17 &47 &59  &11 &23 & \\
  \hline
\end{tabular}
\end{table}

We now prove \eqref{pp-1-5-mod49}. By Lemma \ref{binom-lem}, we find that
\begin{align}\label{pp-mod49-start}
\sum_{n=0}^{\infty}p_{1/5}(n)q^n=\dfrac{(q;q)_{\infty}^{10}}{(q;q)_{\infty}^{49/5}}\equiv \dfrac{(q;q)_{\infty}^{10}}{(q^7;q^7)_{\infty}^{7/5}} \pmod{49}.
\end{align}
Observe that
\[N=3m^2+2m+3n^2+n\]
is equivalent to
\[ 12N+5=(6m+2)^2+(6n+1)^2.\]
Since $\left(\dfrac{-1}{7} \right)=-1$, we know that
\[12N+5\equiv 0\!\!\!\!\!\pmod{7}\,\]
if and only if
\[3m+1\equiv 0\!\!\!\!\!\pmod{7}\,\text{\, and\,}\,
 6n+1\equiv 0\!\!\!\!\!\pmod{7}.\]
 Using \eqref{eta-10} and comparing the coefficients of $q^{7n+6}$ on both sides of \eqref{pp-mod49-start}, we obtain \eqref{pp-1-5-mod49}.
\end{proof}

Numerical evidences suggest that the following congruences hold.
\begin{conj}\label{conj-1}
For any integer $n\ge 0$,
\begin{align}
p_{1/2}(125n+r) &\equiv 0 \pmod{25}, \quad r \in \{38, 63, 88, 113 \}, \label{pp-1-2-mod25} \\
p_{2/3}(25n+r) &\equiv 0 \pmod{25}, \quad r\in\{19, 24\}, \label{pp-2-3-mod25} \\
p_{2/3}(121n+84) &\equiv 0 \pmod{121}, \label{pp-2-3-mod121} \\
p_{1/4}(25n+r) &\equiv 0 \pmod{25}, \quad r\in\{14, 24\}, \label{pp-1-4-mod25} \\
p_{1/4}(25n+19)&\equiv 0 \pmod{125}, \label{pp-1-4-mod125} \\
p_{1/4}(121n+92) &\equiv 0 \pmod{121}, \label{pp-1-4-mod121} \\
p_{1/5}(49n+r) &\equiv 0 \pmod{343}, \quad r\in\{27, 34, 48\}, \label{pp-1-5-mod343} \\
p_{2/5}(49n+40) &\equiv 0 \pmod{49}. \label{pp-2-5-mod343}
\end{align}
\end{conj}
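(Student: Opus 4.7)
The plan is to extend the elementary route used to prove \eqref{pp-1-5-mod49} by iterating Lemma~\ref{binom-lem} and exploiting the quadratic-form structure of the double-sum identities \eqref{eta-1}--\eqref{eta-26}. For a conjectured congruence $p_{a/b}(\ell^{s}n+r)\equiv 0\pmod{\ell^{t}}$, I would first choose an integer $d$ with $\ell^{t}\mid(db-a)$, so that Lemma~\ref{binom-lem} yields
\begin{equation*}
(q;q)_{\infty}^{a/b}\equiv (q;q)_{\infty}^{d}\cdot(q^{\ell};q^{\ell})_{\infty}^{(a-db)/(b\ell)}\pmod{\ell^{t}}.
\end{equation*}
The second factor involves only exponents divisible by $\ell$, so modulo $\ell^{t}$ the coefficient of $q^{\ell^{s}n+r}$ in $(q;q)_{\infty}^{a/b}$ reduces to a convolution against the coefficients of $(q;q)_{\infty}^{d}$ at exponents in the residue class $r\pmod{\ell}$.

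Since the required $d$ is typically far larger than $26$, the next step is to expand it $\ell$-adically. Writing $d=d_{0}+\ell e$ with $d_{0}$ drawn from (or a product of elements of) the ``nice'' list $\{1,3,4,6,8,10,14,26\}$, Lemma~\ref{binom-lem} applied to $(q;q)_{\infty}^{\ell e}$ produces an expansion
\begin{equation*}
(q;q)_{\infty}^{d}\equiv(q;q)_{\infty}^{d_{0}}\,(q^{\ell};q^{\ell})_{\infty}^{e}+\ell R_{1}+\ell^{2}R_{2}+\cdots+\ell^{t-1}R_{t-1}\pmod{\ell^{t}},
\end{equation*}
where each $R_{i}$ is built from $(q;q)_{\infty}^{d_{0}}$ and the series $S=\bigl((q;q)_{\infty}^{\ell}-(q^{\ell};q^{\ell})_{\infty}\bigr)/\ell\in\mathbf{Z}[[q]]$. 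For the leading term, the quadratic-form arguments from \eqref{eta-8}--\eqref{eta-26} show that the residue condition on $r$ modulo $\ell$ forces certain linear factors among $3m+1$, $6n+1$, $2j+1$, $4n+1$ to vanish modulo $\ell$ simultaneously; since these enter the formulas to the first or third power, the leading term already carries a factor of $\ell^{3}$ or $\ell^{4}$, which supplies most of the required $\ell^{t}$.

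The main obstacle will be the correction terms $\ell^{i}R_{i}$: each must be shown to be divisible by $\ell^{t-i}$ along the arithmetic progression $\ell^{s}n+r$. Unlike the leading term, the $R_{i}$ lack a clean double-sum representation and must be unwrapped in terms of $(q;q)_{\infty}^{d_{0}}$ and $S$ before the quadratic-form sieve can be reapplied. An additional layer of difficulty arises in the conjectures whose AP modulus $\ell^{s}$ strictly exceeds $\ell$---namely \eqref{pp-1-2-mod25}, \eqref{pp-2-3-mod121}, \eqref{pp-1-4-mod125} and \eqref{pp-1-5-mod343}---where the quadratic-form analysis must be refined to residues modulo $\ell^{s}$, restricting the summation indices to narrower classes and checking that the surviving terms supply the extra factors of $\ell$. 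I expect the bookkeeping to be most delicate for \eqref{pp-1-4-mod125} and \eqref{pp-1-5-mod343}, where the divisibility target exceeds $\ell^{2}$.

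Should the elementary approach stall at these deepest cases, the natural fallback is the modular-forms framework used by Ng~\cite{Ng}, Yang~\cite{Yang} and Locus--Wagner~\cite{Locus}: multiply $(q;q)_{\infty}^{a/b}$ by a holomorphic eta quotient to realize it inside an integer-weight cusp space $S_{k}(\Gamma_{0}(N),\chi)$, apply the Atkin operator $U_{\ell}$ to isolate the residue class, and verify the congruence by comparing Hecke eigenvalues up to the Sturm bound. This is the route by which \eqref{2-3-mod-19} and \eqref{1-2-mod-17} were originally proved, and I expect it to be the most robust path to the $\ell^{3}$-level conjectures \eqref{pp-1-4-mod125} and \eqref{pp-1-5-mod343}.
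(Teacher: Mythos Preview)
The statement you are trying to prove is labeled as a \emph{conjecture} in the paper, not a theorem: the paper offers no proof of any of the congruences \eqref{pp-1-2-mod25}--\eqref{pp-2-5-mod343}. They are presented solely on numerical evidence. In Section~4 the authors remark that ``It is possible to prove some congruences in Conjectures~\ref{conj-1} and~\ref{conj-2} using the theory of modular forms,'' but the single worked example they give is \eqref{p-1-2-mod289}, which belongs to Conjecture~\ref{conj-2}, not Conjecture~\ref{conj-1}. They close by saying explicitly that ``we are not sure if one can establish these congruences without the use of modular forms.'' So there is no ``paper's own proof'' to compare against.

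With that caveat, here is how your proposal sits relative to the paper's expectations. Your fallback---realize $(q;q)_\infty^{a/b}$ inside a cusp space, apply $U_\ell$ and check to the Sturm bound---is exactly the route the paper endorses and illustrates for \eqref{p-1-2-mod289}; in that sense your second paragraph is on target and is the only part the authors would regard as a genuine plan of proof. Your primary, elementary strategy goes beyond anything in the paper and is, by the authors' own admission, of uncertain feasibility. The specific obstacle you flag is real: once you write $(q;q)_\infty^{d}\equiv(q;q)_\infty^{d_0}(q^\ell;q^\ell)_\infty^{e}+\ell R_1+\cdots\pmod{\ell^t}$, the correction series $R_i$ involve the ``error'' $S=\bigl((q;q)_\infty^{\ell}-(q^\ell;q^\ell)_\infty\bigr)/\ell$, which has no known lacunary or quadratic-form expansion, so the sieve that drives the $d\in\{4,6,8,10,14,26\}$ arguments simply does not apply to those terms. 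Nothing in your outline explains how to extract an extra factor of $\ell^{t-i}$ from $R_i$ along the progression $\ell^{s}n+r$, and absent such a mechanism the elementary plan is not a proof sketch but a hope. If you intend to pursue this, the honest statement is that the elementary route is open, the modular-forms route is expected to succeed case by case, and neither has been carried out in the paper.
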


Next, we present some explicit congruences satisfied by $p_{-{a}/{b}}(n)$ where $1\leq a <b \leq 5$.
\begin{theorem}\label{explicit-cong}
For any integer $n\ge 0$,
\begin{align}
  p_{-1/2}(7n+r) &\equiv 0 \pmod{7}, \quad r \in \{2,4,5,6\}, \label{p-1-2-mod7} \\
 p_{-1/2}(49n+r)  &\equiv 0 \pmod{49},  \quad r \in \{20,34,41,48\},  \label{p-1-2-mod49} \\
  p_{-1/2}(17n+11) &\equiv 0 \pmod{17}, \label{p-1-2-mod17} \\
 p_{-1/2}(29n+26) &\equiv 0 \pmod{29}, \label{p-1-2-mod29} \\
  p_{-1/3}(5n+r) &\equiv 0 \pmod{5},  \quad r\in \{2, 3, 4\}, \label{p-1-3-mod5} \\
  p_{-1/3}(5n+3) &\equiv 0 \pmod{25}, \label{p-1-3-mod5square}\\
 p_{-1/3}(19n+14) &\equiv 0 \pmod{19},  \label{p-1-3-mod19}\\
  p_{-1/3}(31n+28) &\equiv 0 \pmod{31}, \label{p-1-3-mod31} \\
 p_{-{2}/{3}}(5n+r) &\equiv  0 \pmod{5}, \quad r\in \{3,4\}, \label{p-2-3-mod5} \\
 p_{-{2}/{3}}(11n+r) &\equiv 0 \pmod{11}, \quad r\in \{2,4,5,7,8,9\}, \label{p-2-3-mod11} \\
  p_{-1/4}(5n+r) &\equiv  0 \pmod{5},  \quad r\in \{3, 4\}, \label{p-1-4-mod5} \\
  p_{-1/4}(13n+r) &\equiv 0 \pmod{13}, \quad r \in \{4,5,7,8,9,11,12\},  \label{p-1-4-mod13} \\
 p_{-{3}/{4}}(5n+r) &\equiv  0 \pmod{5},  \quad r\in \{2, 3, 4\},  \label{p-3-4-mod5}\\
  p_{-{3}/{4}}(43n+39) &\equiv 0 \pmod{43}, \label{p-3-4-mod43} \\
 p_{-{3}/{4}}(59n+24) &\equiv 0  \pmod{59}, \label{p-3-4-mod59} \\
 p_{-{3}/{4}}(107n+97) &\equiv 0 \pmod{107}, \label{p-3-4-mod107} \\
 p_{-{1}/{5}}(31n+23) &\equiv 0 \pmod{31},  \label{p-1-5-mod31} \\
 p_{-{1}/{5}}(71n+29) &\equiv 0 \pmod{71},  \label{p-1-5-mod71} \\
 p_{-{1}/{5}}(131n+119) &\equiv 0 \pmod{131}, \label{p-1-5-mod131} \\
 p_{-2/5}(7n+r) &\equiv 0 \pmod{7},  \quad r\in \{3,4,6\},  \label{p-2-5-mod7} \\
 p_{-2/5}(11n+9) &\equiv 0 \pmod{11},  \label{p-2-5-mod11} \\
 p_{-2/5}(17n+r) &\equiv 0 \pmod{17}, \quad r\in \{2,5,7,8,9,12,13,14,16\}, \label{p-2-5-mod17} \\
p_{-3/5}(11n+8) &\equiv 0 \pmod{11}, \label{p-3-5-mod11} \\
p_{-4/5}(11n+7) &\equiv 0 \pmod{11}, \label{p-4-5-mod11}  \\
p_{-4/5}(19n+r) &\equiv 0 \pmod{19}, \quad r\in \{4,5, 7, 8, 11, 12, 13, 14, 16, 18\}.  \label{p-4-5-mod19}
\end{align}
\end{theorem}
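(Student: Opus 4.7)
The strategy is to split the twenty-five congruences into two groups: those that follow directly from Theorem \ref{thm-general}, and the two prime-square congruences \eqref{p-1-2-mod49} and \eqref{p-1-3-mod5square}, which need an additional step. For the first group, which comprises every equation of the theorem except those two, my plan is to build a table in the style of Table \ref{parameter-1} recording for each target congruence the case (1)--(5) of Theorem \ref{thm-general} to be invoked together with the corresponding choice of $d$. In every line the required $d$ is determined by the condition $\ell \mid (a+db)$, the prescribed congruence on $\ell$ modulo $4$, $6$, or $12$ is then routinely verified, and the residue condition on $24r + d$, $8r + 1$, or $24r + 1$ is checked by a single arithmetic computation; for example, \eqref{p-1-2-mod17} uses case (3) with $d = 8$ (since $1 + 2 \cdot 8 = 17$), \eqref{p-3-4-mod107} uses case (5) with $d = 26$ (since $3 + 4 \cdot 26 = 107$ and $107 \equiv 11 \pmod{12}$), and \eqref{p-4-5-mod19} uses case (2) with $d = 3$.

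For \eqref{p-1-3-mod5square} the new ingredient is Lemma \ref{binom-lem} at the next power. Since $a = 1$, $b = 3$, $d = 8$ satisfy $a + db = 25 = 5^2$, applying the lemma with $j = 2$ and $p = 5$ yields
\begin{equation*}
\sum_{n=0}^{\infty} p_{-1/3}(n)\, q^n \;\equiv\; \frac{(q;q)_\infty^{8}}{(q^5;q^5)_\infty^{5/3}} \pmod{25}.
\end{equation*}
The denominator on the right contributes only powers $q^{5k}$, so it suffices to show that the coefficient of $q^{5n+3}$ in $(q;q)_\infty^{8}$ is divisible by $25$. Plugging $\ell = 5$ and $r = 3$ into the $d = 8$ analysis from the proof of Theorem \ref{thm-general}, the identity $24 \cdot 3 + 8 \equiv 0 \pmod 5$ combined with $\bigl(\tfrac{-3}{5}\bigr) = -1$ forces $3m+1 \equiv 0 \pmod 5$ (and likewise $6m+1 \equiv 0 \pmod 5$) in every surviving summand of \eqref{eta-8}; the cubes $(3m+1)^3$ and $(6m+1)^3$ then supply a factor of $5^3$, which is stronger than needed.

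The principal obstacle is \eqref{p-1-2-mod49}, because no $d \in \{1,3,4,6,8,10,14,26\}$ satisfies $49 \mid (1 + 2d)$. My plan is to take $d = 3$ (so that $1 + 2d = 7$) and sharpen Lemma \ref{binom-lem} to modulus $49$: writing $(1-x)^7 = (1-x^7) + 7 F(x)$ with $F(x) = \bigl((1-x)^7 - (1-x^7)\bigr)/7 \in \mathbf{Z}[x]$, the generalized binomial theorem yields $(1-x)^{7/2} \equiv (1-x^7)^{1/2} + (7/2)(1-x^7)^{-1/2} F(x) \pmod{49}$, and taking the product over $m \ge 1$ gives
\begin{equation*}
\sum_{n=0}^{\infty} p_{-1/2}(n)\, q^n \;\equiv\; \frac{(q;q)_\infty^{3}}{(q^7;q^7)_\infty^{1/2}} \Bigl( 1 - \frac{7}{2} \sum_{m \ge 1} \frac{F(q^m)}{1 - q^{7m}} \Bigr) \pmod{49}.
\end{equation*}
The $7$-dissection of $(q;q)_\infty^{3}$ obtained from Jacobi's identity \eqref{eta-3} shows that $\sum_{n \ge 0} [q^{7n+6}]\bigl((q;q)_\infty^{3}\bigr)\, q^n = -7\, (q^7;q^7)_\infty^{3}$, so the main term supplies an automatic factor of $7$ at every exponent congruent to $6$ mod $7$, while the correction term already carries an explicit factor of $7$. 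The problem thus reduces to showing that the integer series $p_{-1/2}(7n+6)/7$ is divisible by $7$ when $n \equiv s \pmod 7$ for $s \in \{2,4,5,6\}$, which is precisely the set of residues for which $8s+1$ is either $0$ or a non-QR modulo $7$. The hard part will be to identify this quotient as a fractional eta-quotient equivalent, modulo $7$, to $(q;q)_\infty^{41/2}$ (using $(q^7;q^7)_\infty^{3} \equiv (q;q)_\infty^{21} \pmod 7$ and Lemma \ref{binom-lem}), so that a second application of the $d = 3$ argument of Theorem \ref{thm-general} — now with $a = -41$, $b = 2$, and the same residue condition — finishes the proof; the correction term, involving the $F(q^m)$ sum, must be tracked in parallel and shown to contribute only residues that lie in the same admissible set.
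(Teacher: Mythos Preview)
Your treatment of every congruence except \eqref{p-1-2-mod49} matches the paper exactly: the twenty-three single-prime congruences are read off from Theorem~\ref{thm-general} via a parameter table (the paper's Table~\ref{parameter-2}), and \eqref{p-1-3-mod5square} is handled precisely as you describe, taking $d=8$ so that $a+db=25$ and letting the cubes $(3m+1)^3$, $(6m+1)^3$ in \eqref{eta-8} supply a factor of $5^3$.

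For \eqref{p-1-2-mod49} the paper takes a cleaner route, and your proposal has a real gap. You are right that no $d\in\{1,3,4,6,8,10,14,26\}$ gives $49\mid(1+2d)$, but the paper sidesteps this by choosing $d=24$: since $1+2\cdot 24=49$, Lemma~\ref{binom-lem} with $j=2$ applies \emph{directly} at modulus $49$, yielding
\[
\sum_{n\ge 0}p_{-1/2}(n)\,q^{n+1}\;\equiv\;\frac{1}{(q^7;q^7)_\infty^{7/2}}\sum_{n\ge 1}\tau(n)\,q^n \pmod{49},
\]
with no correction term at all. The Hecke relation $\tau(7n)=\tau(7)\tau(n)-7^{11}\tau(n/7)\equiv 14\,\tau(n)\pmod{49}$ then extracts a uniform factor of $7$, and the classical congruence $\tau(n)\equiv n\,\sigma_3(n)\pmod 7$, together with an elementary divisor-pairing argument, gives $\tau(7n+s)\equiv 0\pmod 7$ for $s\in\{0,3,5,6\}$, which translates to the residues $\{20,34,41,48\}$ modulo $49$.

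In your approach the refined expansion $(1-x)^{7/2}\equiv (1-x^7)^{1/2}+\tfrac{7}{2}(1-x^7)^{-1/2}F(x)\pmod{49}$ is correct, and your main-term computation is fine --- indeed the paper's identity confirms $\sum_n\tfrac{1}{7}p_{-1/2}(7n+6)\,q^n\equiv 2(q;q)_\infty^{41/2}\pmod 7$, so your planned second pass with $a=-41$, $b=2$, $d=3$ would succeed \emph{once that identity is established}. The problem is the correction term $-\tfrac{7}{2}\,(q;q)_\infty^{3}(q^7;q^7)_\infty^{-1/2}G(q)$ with $G(q)=\sum_{m\ge 1}F(q^m)/(1-q^{7m})$: this is not a series in $q^7$, so extracting its $q^{7n+6}$-part produces no recognisable eta-quotient, and you offer no mechanism for showing its contribution vanishes (mod $7$) at the required residues of $n$. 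Your last sentence concedes that this is the ``hard part'' without supplying the idea. Switching to $d=24$ and the $\tau$-function eliminates the difficulty entirely.
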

\begin{proof}
Except for the congruences \eqref{p-1-2-mod49} and \eqref{p-1-3-mod5square}, other congruences follow directly from Theorem \ref{thm-general} with suitable parameters given in Table \ref{parameter-2}.
\begin{table}[h]
\caption{}\label{parameter-2}
\begin{tabular}{|c|c|c|c|c|c|c|c|c|}
  \hline
  Eq. & \eqref{p-1-2-mod7}  & \eqref{p-1-2-mod17} & \eqref{p-1-2-mod29} & \eqref{p-1-3-mod5} & \eqref{p-1-3-mod19} & \eqref{p-1-3-mod31} & \eqref{p-2-3-mod5} & \eqref{p-2-3-mod11} \\
  \hline
  $a$& 1 & 1 & 1 & 1 & 1       & 1 &  2 & 2    \\
  \hline
  $b$ & 2 & 2 & 2 & 3 & 3      & 3 &  3 & 3  \\
  \hline
  $d$ & 3 & 8 & 14 & 3 & 6     & 10 & 1 & 3  \\
  \hline
 $\ell$ & 7 & 17 & 29 & 5 & 19 & 31 & 5 & 11 \\
  \hline
  Eq. &\eqref{p-1-4-mod5} &\eqref{p-1-4-mod13} &\eqref{p-3-4-mod5} &\eqref{p-3-4-mod43}
   &\eqref{p-3-4-mod59} &\eqref{p-3-4-mod107} &\eqref{p-1-5-mod31}  &\eqref{p-1-5-mod71} \\
   \hline

  \hline
  $a$  &  1 &1 &3 &3 & 3   &3  &1   &1       \\
  \hline
  $b$ & 4 &4 &4  &4  &4    &4  &5   &5      \\
  \hline
  $d$ & 1 &3 &3 &10  &14   &26 &6   &14         \\
  \hline
  $\ell$ & 5 &13 &5 &43 &59 &107 &31  &71         \\
  \hline
  Eq. &\eqref{p-1-5-mod131} &\eqref{p-2-5-mod7}  & \eqref{p-2-5-mod11}  &\eqref{p-2-5-mod17}  &\eqref{p-3-5-mod11}  &\eqref{p-4-5-mod11}  &\eqref{p-4-5-mod19} & \\
  \hline
  $a$ &1  &2  &2  &2  &3  &4  &4   & \\
  \hline
  $b$ &5  &5  &5  &5  &5  &5  &5    & \\
  \hline
  $d$ &26  &1  &4  &3  &6  &8  &3     &\\
  \hline
  $\ell$ &131   &7  &11 &17 &11 &11 &19  &  \\
  \hline
\end{tabular}
\end{table}

Now we prove \eqref{p-1-2-mod49}.
By Lemma \ref{binom-lem},
\begin{align}\label{1-2-mod7power-start}
\sum\limits_{n=0}^{\infty}p_{-1/2}(n)q^{n+1}&=\dfrac{q(q;q)_{\infty}^{24}}{(q;q)_{\infty}^{49/2}} \nonumber \\
&\equiv  \dfrac{1}{(q^7;q^7)_{\infty}^{7/2}}\sum\limits_{n\ge 0}\tau(n)q^n \pmod{49},
\end{align}
where $\tau(n)$ is Ramanujan's tau function.

For any prime $p$, it is known that \cite[Chapter VII]{Serre}
\[\tau(pn)=\tau(p)\tau(n)-p^{11}\tau(n/p).\]
Hence,
\begin{equation}\label{tau-7n-n}\tau(7n)=-16744\tau(n)-7^{11}\tau(n/7) \equiv 14\tau(n) \pmod{49}.\end{equation}

Extracting the terms of the form $q^{7n}$ on both sides of \eqref{1-2-mod7power-start}, replacing $q^7$ by $q$ and using
\eqref{tau-7n-n}, we deduce that
\begin{equation*}
\sum_{n=0}^{\infty}p_{-1/2}(7n+6)q^{n+1}\equiv\dfrac{1}{(q;q)_{\infty}^{7/2}}\sum_{n=0}^{\infty}\tau(7n)q^n
\equiv\dfrac{14}{(q;q)_{\infty}^{7/2}}\sum_{n=0}^{\infty}\tau(n)q^n \pmod{49},
\end{equation*}
which implies, by Lemma \ref{binom-lem}, that
\begin{equation*}
\sum_{n=0}^{\infty} \frac{1}{7}p_{-1/2}(7n+6)q^{n+1}
\equiv\dfrac{2}{(q;q)_{\infty}^{7/2}}\sum_{n=0}^{\infty}\tau(n)q^n
\equiv\dfrac{2}{(q^7;q^7)_{\infty}^{1/2}}\sum_{n=0}^{\infty}\tau(n)q^n \pmod{7}.
\end{equation*}
Hence,
\begin{equation}\label{1-2-mod7power-end}
(q^7;q^7)_{\infty}^{1/2}\sum_{n=0}^{\infty} p_{-1/2}(7n+6)q^{n+1}\equiv
14\sum_{n=0}^{\infty}\tau(n)q^n \pmod{49}.
\end{equation}


We recall from \cite[p.\ 97, Eq.\ (56)]{Serre} that
\begin{align}\label{tau-mod7}
\tau(n) \equiv n \sigma_{3}(n) \pmod{7},
\end{align}
where $\sigma_{3}(n)=\sum_{d|n}d^3$. We claim that if the residue of $n$ modulo 7 is 3, 5 or 6, then $\sigma_{3}(n)\equiv 0$ (mod 7). Indeed, in these cases, $n$ cannot be a square number and $n^3\equiv -1$ (mod 7). Therefore, when $n\equiv 3, 5$ or $6\pmod{7}$, we find that
\[\sigma_{3}(n)=\sum_{\substack{d|n\\  d<\sqrt{n}}}\left( d^3+\left(\dfrac{n}{d}\right)^3 \right)
=\sum_{\substack{d|n\\  d<\sqrt{n}}}\dfrac{d^6-1}{d^3}\equiv 0 \pmod{7},
\]
where the last congruence follows from Fermat's little theorem.

By \eqref{tau-mod7} we deduce that
\[\tau(7n+s) \equiv 0 \pmod{7}, \quad s\in \{0, 3,5,6\}.\]
Using this result and \eqref{1-2-mod7power-end}, we deduce that
\[p_{-1/2}(7(7n+s-1)+6) \equiv 0 \pmod{49}, \quad s\in \{0, 3, 5, 6\}.\]
The congruences in \eqref{p-1-2-mod49} are proved.

Next, we prove \eqref{p-1-3-mod5square}.

By Lemma \ref{binom-lem}, we find that
\begin{align}\label{p-1-3-start}
\sum_{n=0}^{\infty}p_{-1/3}(n)q^n=\dfrac{(q;q)_{\infty}^8}{(q;q)_{\infty}^{25/3}}\equiv \dfrac{(q;q)_{\infty}^8}{(q^5;q^5)_{\infty}^{5/3}} \pmod{25}.
\end{align}
Now we use  the the expansion \eqref{eta-8} of $(q;q)_{\infty}^8$. Observe that
\[N=3m^2+2m+n^2 \]
is equivalent to
\[3N+1=(3m+1)^2+3n^2.\]
Since $\left(\dfrac{-3}{5}\right)=-1$, we find that
\[3N+1\equiv 0 \pmod{5} \,\, \text{(or equivalently, $N\equiv 3\!\!\!\!\!\pmod{5}$)}\]
{if and only if}
\[3m+1\equiv 0\pmod{5}\,\text{ and}\,  n\equiv 0\pmod{5}.\]
Similarly, observe that
\[N=3m^2+m+n^2+n\]
is equivalent to
\[4(3N+1)=(6m+1)^2+3(2n+1)^2.\]
We know that
\[3N+1\equiv 0\!\!\!\!\!\pmod{5}\text{\, (or equivalently,  $N\equiv 3\!\!\!\!\!\pmod{5}$)}\]
{if and only if}
 \[6m+1\equiv 0\!\!\!\!\!\pmod{5}\,\text{ and }\,
 2n+1\equiv 0\!\!\!\!\!\pmod{5}.\] Therefore, \eqref{eta-8} and \eqref{p-1-3-start} imply
\[p_{-1/3}(5n+3) \equiv 0 \pmod{25}. \qedhere \]
\end{proof}

As an interesting application of the congruences in this section, using \eqref{p-1-3-mod5} and \eqref{p-2-3-mod5}, we can give a new proof of \eqref{Rama-mod-5}.
\begin{corollary}
For any integer $n\ge 0$,
\[p(5n+4) \equiv 0 \pmod{5}.\]
\end{corollary}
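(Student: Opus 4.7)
The plan is to exploit the factorization $(q;q)_\infty^{-1}=(q;q)_\infty^{-1/3}\,(q;q)_\infty^{-2/3}$, which translates into the convolution identity
\begin{equation*}
p(n)=\sum_{m+k=n} p_{-1/3}(m)\,p_{-2/3}(k).
\end{equation*}
By Theorem~\ref{denominator}, the denominators of $p_{-1/3}(m)$ and $p_{-2/3}(k)$ are powers of $3$, so every term on the right is a $5$-integral rational number and the identity makes sense modulo $5$.

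Setting $n=5N+4$, I would split the sum according to the residues of $m$ and $k$ modulo $5$. The only pairs $(m\bmod 5,\,k\bmod 5)$ that can occur are
\begin{equation*}
(0,4),\ (1,3),\ (2,2),\ (3,1),\ (4,0).
\end{equation*}
Now I apply the two congruences supplied: \eqref{p-1-3-mod5} gives $p_{-1/3}(m)\equiv 0\pmod{5}$ whenever $m\equiv 2,3,4\pmod{5}$, and \eqref{p-2-3-mod5} gives $p_{-2/3}(k)\equiv 0\pmod{5}$ whenever $k\equiv 3,4\pmod{5}$. Checking the five cases: the pairs $(0,4)$ and $(1,3)$ are killed by \eqref{p-2-3-mod5} (through the factor $p_{-2/3}(k)$), while $(2,2)$, $(3,1)$ and $(4,0)$ are killed by \eqref{p-1-3-mod5} (through the factor $p_{-1/3}(m)$). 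Hence every summand contributing to $p(5N+4)$ is divisible by $5$, and the corollary follows.

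The argument is genuinely routine once the two input congruences are available; the only potential subtlety is the $(2,2)$ case, where both factors have the same residue class, so one needs that $r=2$ is actually in the forbidden set for at least one of $p_{-1/3}$ or $p_{-2/3}$. Fortunately, $p_{-1/3}(5m+2)\equiv 0\pmod{5}$ is part of \eqref{p-1-3-mod5}, so this case is handled and no additional ingredient is required.
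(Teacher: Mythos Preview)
Your proof is correct and follows essentially the same approach as the paper: both use the convolution $p(n)=\sum_{k}p_{-1/3}(k)p_{-2/3}(n-k)$ and then check that for $n=5N+4$ every term is killed by either \eqref{p-1-3-mod5} or \eqref{p-2-3-mod5}. Your explicit invocation of Theorem~\ref{denominator} to justify $5$-integrality of the summands is a welcome detail that the paper leaves implicit.
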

\begin{proof}
Since
\[\sum_{n=0}^{\infty}p(n)q^n=\Big(\sum_{n=0}^{\infty}p_{-1/3}(n)q^n \Big)\Big(\sum_{n=0}^{\infty}p_{-2/3}(n)q^n \Big),\]
\begin{align}\label{p-3-product}
p(n)=\sum_{k=0}^{n}p_{-1/3}(k)p_{-2/3}(n-k).
\end{align}
Note that for any integers $k$ and $n$, either the least non-negative residue of $k$ modulo 5 belongs to $\{2,3,4\}$ or the least non-negative
residue of $5n+4-k$ modulo 5 belongs to $\{3,4\}$. Hence by  \eqref{p-1-3-mod5} and  \eqref{p-2-3-mod5}, we always have
\[p_{-1/3}(k)p_{-2/3}(5n+4-k)\equiv 0 \pmod{5}.\]
This proves the corollary.
\end{proof}

Similarly, by using \eqref{p-1-2-mod7} we give a new proof of \eqref{Rama-mod-7}.
\begin{corollary}
For any integer $n\ge 0$,
\[p(7n+5) \equiv 0 \pmod{7}.\]
\end{corollary}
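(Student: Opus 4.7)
The plan is to mimic the preceding corollary's proof of \eqref{Rama-mod-5}, but now splitting $1/(q;q)_\infty$ as the square of $(q;q)_\infty^{-1/2}$ instead of as a product of the $-1/3$ and $-2/3$ factors. That is, I would start from
\begin{equation*}
\sum_{n=0}^{\infty} p(n)q^n = \Bigl(\sum_{n=0}^{\infty} p_{-1/2}(n)q^n\Bigr)^{2},
\end{equation*}
which yields the convolution
\begin{equation*}
p(n) = \sum_{k=0}^{n} p_{-1/2}(k)\, p_{-1/2}(n-k).
\end{equation*}

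The main step is then to show that when $n=7m+5$ every term of this convolution is already $\equiv 0 \pmod{7}$, so that the whole sum is. By \eqref{p-1-2-mod7}, $p_{-1/2}(j)\equiv 0\pmod{7}$ whenever $j\bmod 7 \in \{2,4,5,6\}$. Thus it suffices to verify that for every residue $r = k \bmod 7$, either $r$ or $(5-r)\bmod 7 = (7m+5-k)\bmod 7$ lies in $\{2,4,5,6\}$.

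The case analysis is trivial and is the only place where anything could go wrong: the complement of $\{2,4,5,6\}$ modulo $7$ is $\{0,1,3\}$, and under $r\mapsto 5-r\pmod{7}$ these map to $5,4,2$ respectively, all of which lie in $\{2,4,5,6\}$. So in every case at least one of the two factors $p_{-1/2}(k)$ and $p_{-1/2}(7m+5-k)$ is divisible by $7$, whence each summand, and therefore $p(7m+5)$, is divisible by $7$. There is no real obstacle here; the content of the proof is entirely pushed into the nontrivial congruence \eqref{p-1-2-mod7}, and the combinatorial check above is the only thing left to verify.
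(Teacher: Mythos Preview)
Your proof is correct and follows essentially the same route as the paper: both square $(q;q)_\infty^{-1/2}$ to obtain the convolution $p(n)=\sum_{k} p_{-1/2}(k)p_{-1/2}(n-k)$ and then invoke \eqref{p-1-2-mod7} together with the residue check that at least one of $k$ and $7n+5-k$ lies in $\{2,4,5,6\}\pmod 7$. Your explicit verification via $r\mapsto 5-r$ is exactly what the paper leaves to the reader.
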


\begin{proof}
Since
\[\sum\limits_{n=0}^{\infty}p(n)q^n=\Big(\sum\limits_{n=0}^{\infty}p_{-1/2}(n)q^n \Big)^2,\]
\begin{align}\label{p-product}
p(n)=\sum\limits_{k=0}^{n}p_{-{1/2}}(k)p_{-{1/2}}(n-k).
\end{align}
Note that for any integers $k$ and $n$, at least one of $k$ or $7n+5-k$ must be congruent to 2, 4, 5 or 6. By  \eqref{p-1-2-mod7} and \eqref{p-product}, we conclude that $p(7n+5)$ is always divisible by 7.
\end{proof}

Numerical evidences suggest that the following conjecture holds.
\begin{conj}\label{conj-2}
For any integer $n\ge 0$, we have
\begin{align}
p_{-1/2}(343n+293) &\equiv 0 \pmod{343}, \label{p-1-2-mod343}\\
p_{-1/2}(2401n+r) &\equiv 0 \pmod{2401}, \quad r \in \{979, 1665, 2008, 2351\}, \label{p-1-2-mod2401}\\
p_{-1/2}(289n+283) &\equiv 0\pmod{289}, \label{p-1-2-mod289} \\
p_{-1/3}(25n+r) &\equiv 0 \pmod{125}, \quad r\in \{18, 23\}, \label{p-1-3-mod25} \\
p_{-1/3}(361n+356) &\equiv 0 \pmod{361}, \label{p-1-3-mod361} \\
p_{-2/3}(49n+r) &\equiv 0 \pmod{7}, \quad r\in \{22, 29, 43\}, \label{p-2-3-mod7} \\
p_{-{3}/{4}}(25n+r) &\equiv 0 \pmod{25}, \quad r \in \{13, 23\}, \label{p-3-4-mod25} \\
p_{-3/4}(25n+18) & \equiv 0 \pmod{125}, \label{p-3-4-mod125} \\
p_{-3/4}(125n+r) &\equiv 0 \pmod{3125}, \quad r \in \{93, 118\}. \label{p-3-4-mod3125}
\end{align}
\end{conj}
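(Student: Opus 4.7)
The plan is to extend the approach that worked for \eqref{p-1-2-mod49} and \eqref{p-1-3-mod5square}, iterating Lemma \ref{binom-lem} deeper and exploiting higher congruences for the coefficient functions that arise. For a conjectured congruence of the form $p_{-a/b}(\ell^s n+r)\equiv 0 \pmod{\ell^j}$, I would choose a positive integer $d$ with $a+db\equiv 0 \pmod{\ell^j}$, write
\begin{equation*}
\sum_{n=0}^{\infty}p_{-a/b}(n)q^n=\frac{(q;q)_\infty^{d}}{(q;q)_\infty^{(a+db)/b}},
\end{equation*}
and apply Lemma \ref{binom-lem} with $p=\ell$ so that the denominator is replaced modulo $\ell^j$ by $(q^\ell;q^\ell)_\infty$ raised to a smaller exponent whose denominator is coprime to $\ell$. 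The numerator $(q;q)_\infty^d$ can then be expanded using a suitable $\eta$-function identity (Euler's pentagonal theorem, Jacobi's identity, \eqref{eta-4}--\eqref{eta-26}, or a factorization $(q;q)_\infty^d=(q;q)_\infty^{24t}\cdot(q;q)_\infty^{d-24t}=\Delta(q)^t q^{-t}(q;q)_\infty^{d-24t}$), after which one extracts the coefficient of $q^{\ell^s n+r}$ and reads off the divisibility.

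For the family built on $\ell=7$ — namely \eqref{p-1-2-mod343}, \eqref{p-1-2-mod2401} and \eqref{p-2-3-mod7} — the natural tool is $\tau(n)$. Iterating the recursion $\tau(7n)=-16744\tau(n)-7^{11}\tau(n/7)$ and the congruence $\tau(n)\equiv n\sigma_3(n)\pmod{7}$ used in the proof of \eqref{p-1-2-mod49} should yield refined congruences $\tau(7n+s)\equiv 0\pmod{7^j}$ for appropriate $s$; what is needed are the analogues of $\tau(n)\equiv n\sigma_3(n)\pmod{7}$ modulo $49$, $343$ and $2401$, obtainable from the theory of Hecke eigenforms mod $7^j$ or equivalently from the image of $\Delta$ in the ring of mod-$7^j$ modular forms on $\Gamma_0(1)$. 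The case \eqref{p-1-2-mod289} is entirely parallel, with $\ell=17$ replacing $7$; here one uses Yang's mod-$17$ congruence (see \eqref{1-2-mod-17}) as the base case and seeks a Hecke-type lifting to $17^2$. The cases \eqref{p-1-3-mod25}, \eqref{p-1-3-mod361}, \eqref{p-3-4-mod25}, \eqref{p-3-4-mod125}, \eqref{p-3-4-mod3125} should be attacked through \eqref{eta-8} or \eqref{eta-4} the same way \eqref{p-1-3-mod5square} was handled, but one must now show that the relevant quadratic-form representation count vanishes (or is divisible by the correct power of $\ell$) modulo $\ell^j$, rather than only modulo $\ell$. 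For the modulus-$7$ conjecture \eqref{p-2-3-mod7}, which is only a mod-$\ell$ statement in arithmetic progression $49n+r$, I expect the proof to reduce to a routine variation of the case $d=3$ in Theorem \ref{thm-general} after isolating the coefficients of $q^{7n+s}$ via \eqref{eta-3} twice.

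The main obstacle is uniformly the same: obtaining sufficiently deep congruences modulo $\ell^j$ for the arithmetic functions produced by the expansion of $(q;q)_\infty^d$. For $d=24$ and $\ell=7$, this is classical but intricate, and pushing to $\ell^4=2401$ as required by \eqref{p-1-2-mod2401} is at the limit of what elementary recursions deliver — one likely needs either a direct verification inside a suitable space of mod-$7^j$ modular forms, or Serre's theory of $\ell$-adic modular forms together with Hida-type operators to control the $U_\ell$-action on the level-$\ell$ forms that appear after Lemma \ref{binom-lem}. The conjectures modulo $\ell^j$ with $\ell>11$ (such as \eqref{p-1-2-mod289} and \eqref{p-1-3-mod361}) will additionally require identifying the correct Hecke eigenform whose Fourier coefficients govern $p_{-a/b}(n)$ modulo $\ell^j$, an obstacle that is closely analogous to the step from Ramanujan's mod-$\ell$ congruences to the Atkin--Gordon mod-$\ell^j$ congruences for the ordinary partition function.
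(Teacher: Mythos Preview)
Your submission is a research plan, not a proof; and indeed the statement is labelled a \emph{conjecture} in the paper --- the authors prove only \eqref{p-1-2-mod289}, leaving the other eight congruences open. For those eight there is nothing to compare: both you and the paper stop at the level of expectation.

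For \eqref{p-1-2-mod289} your sketch (``Hecke-type lifting to $17^2$'') points in the right direction but misses the decisive step. The paper works with the specific cusp form $\Delta^6\in S_{72}(\mathrm{SL}_2(\mathbf{Z}))$: Lemma~\ref{binom-lem} gives $\Delta^6(\tau)\equiv (q^{17};q^{17})_\infty^{17/2}\sum_{n\ge 0} p_{-1/2}(n)q^{n+6}\pmod{17^2}$, and after applying $T_{17}$ twice one lands back in the six-dimensional space $S_{72}(\mathrm{SL}_2(\mathbf{Z}))$. The crucial move is then to expand $(\Delta^6|_{T_{17}})|_{T_{17}}$ in the explicit basis $\{\Delta^{6-i}E_6^{2i}\}_{i=0}^{5}$ and verify that all six coefficients are divisible by $17^2$ --- a finite computation. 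Your plan never identifies this reduction to a finite-dimensional check; without it you are left with exactly the obstacle you yourself name, and iterating the $\tau$-recursion or invoking abstract $\ell$-adic/Hida machinery does not by itself substitute for pinning down a concrete form in a space small enough to compute in. The authors remark that the same modular method should handle most of the remaining cases, so your overall direction agrees with theirs; the missing ingredient is uniform: choose $d$ so that $q^t(q;q)_\infty^{d}$ lies in a level-one space stable under $T_\ell$, and then carry out the explicit basis computation modulo $\ell^j$.
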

\section{Modular Approach to Some Congruences}
It is possible to prove some congruences in Conjectures \ref{conj-1} and \ref{conj-2} using the theory of modular forms. We illustrate the method by giving a proof to \eqref{p-1-2-mod289}. Let
\begin{equation*}
\mathrm{SL}_{2}(\mathbf{Z}):=\Bigg\{\begin{pmatrix} a & b \\ c & d \end{pmatrix} \Big| a,b,c,d \in \mathbf{Z}, ad-bc=1\Bigg\}.
\end{equation*}
We denote by $M_{k}(\mathrm{SL}_{2}(\mathbf{Z}))$ (resp.\ $S_{k}(\mathrm{SL}_{2}(\mathbf{Z}))$) the space of modular forms (resp.\ cusp forms) of weight $k$ on $\mathrm{SL}_{2}(\mathbf{Z})$. For any positive integer $m$, we define the Hecke operator $T(m)$  and $U$-operator $U(m)$  which send a function
$$f(z)=\sum\limits_{n=0}^{\infty}{a(n)q^{n}}$$
to
$$f(z)|_{T(m)}:=\sum\limits_{n=0}^{\infty}\left(\sum\limits_{d|(m,n)}d^{k-1}a\left(\frac{nm}{d^2}\right)\right)q^n$$
and
$$f(z)|_{U(m)}:=\sum\limits_{n=0}^{\infty}a(mn)q^n,$$
respectively.
It is known that if $f(z)\in M_{k}(\mathrm{SL}_{2}(\mathbf{Z}))$, then $f(z)|_{T(m)}\in M_{k}(\mathrm{SL}_{2}(\mathbf{Z}))$.
\begin{proof}[Modular Proof of \eqref{p-1-2-mod289}]
Let $q=e^{2\pi i\tau}$ with $\mathrm{Im} \tau>0$. Recall the discriminant modular form
\begin{align}
\Delta(\tau):=q(q;q)_{\infty}^{24}.
\end{align}
It is clear that $\Delta^{6}(\tau) \in S_{72}(\mathrm{SL}_{2}(\mathbf{Z}))$ is a cusp form.
By Lemma \ref{binom-lem}, we deduce that
\begin{align}\label{add-1}
\Delta^6(\tau)&=q^6\frac{(q;q)_{\infty}^{289/2}}{(q;q)_{\infty}^{1/2}} \nonumber\\
&=(q;q)_{\infty}^{289/2}\sum_{n=0}^{\infty}p_{-1/2}(n)q^{n+6} \nonumber\\
&\equiv (q^{17};q^{17})_{\infty}^{17/2}\sum_{n=0}^{\infty}p_{-1/2}(n)q^{n+6}  \pmod{17^2}.
\end{align}
Applying the Hecke operator $T_{17}$ to both sides,  and observing that applying $T_{17}$ is the same as applying $U_{17}$ modulo $17^2$, we obtain
\begin{align}\label{add-2}
\Delta^6(\tau)|_{T_{17}} \equiv (q;q)_{\infty}^{17/2}\sum_{n=1}^{\infty}p_{-1/2}(17n-6)q^n \pmod{17^2}.
\end{align}
From \eqref{p-1-2-mod17}, we know that $p_{-1/2}(17n-6)\equiv 0$ (mod 17). By Lemma \ref{binom-lem}, we deduce from \eqref{add-2} that
\begin{align*}
\frac{1}{17}\Delta^6(\tau)|_{T_{17}} &\equiv (q;q)_{\infty}^{17/2}\sum_{n=0}^{\infty} \frac{p_{-1/2}(17n-6)}{17}q^n \\
&\equiv (q^{17};q^{17})_{\infty}^{1/2}\sum_{n=0}^{\infty} \frac{p_{-1/2}(17n-6)}{17}q^n
\pmod{17},
\end{align*}
or \begin{equation}\label{1-2-mod17}
\Delta^6(\tau)|_{T_{17}}
\equiv (q^{17};q^{17})_{\infty}^{1/2}\sum_{n=0}^{\infty} p_{-1/2}(17n-6)q^n
\pmod{17^2}.
\end{equation}
Since $\Delta^6(\tau)|_{T_{17}} \in S_{72}(\mathrm{SL}_{2}(\mathbf{Z}))$, we apply the Hecke operator $T_{17}$ to both sides of \eqref{1-2-mod17} and deduce that
\begin{align}\label{add-3}
\left(\Delta^6(\tau)|_{T_{17}}\right)|_{T_{17}}\equiv (q;q)_{\infty}^{1/2}\sum_{n=0}^{\infty}p_{-1/2}(17^2n-6)q^n \pmod{17^2}.
\end{align}
Now we recall the following Eisenstein series on $\mathrm{SL}_{2}(\mathbf{Z})$:
\begin{align}
E_{6}:=1-504\sum_{n=1}^{\infty}\frac{n^5q^n}{1-q^n}.
\end{align}
Let
\begin{align*}
B_{1}&:=\Delta^6(\tau), \quad B_2:=\Delta^5(\tau)E_{6}^2, \quad B_3:=\Delta^4(\tau)E_{6}^4, \\
B_{4}&:=\Delta^3(\tau)E_{6}^6, \quad B_5:=\Delta^2(\tau)E_{6}^8, \quad  B_6:=\Delta(\tau)E_{6}^{10}.
\end{align*}
It is not difficult to see that $\{B_1,B_2,B_3,B_4,B_5,B_6\}$ forms a basis of $S_{72}(\mathrm{SL}_{2}(\mathbf{Z}))$. By comparing the Fourier coefficients we find that
\begin{align}\label{last-Delta}
\left(\Delta^6(\tau)|_{T_{17}}\right)|_{T_{17}}=\sum_{i=0}^6a_{i}B_{i},
\end{align}
where
\begin{align*}
a_1=&2803266424444011486961793663394426123943306806893849573592292186093616 \\
& 946565526483482308, \\
a_2=&1113231602545024595543146596204782142754892610829246238990919796002850\\
&856740428953088, \\
a_3=&4732834266810238479570385785097996159241875744074623451960104362168616\\
&39045631744, \\
a_4=&-155407415188884349022329179204737911390822930792498205036684797032177\\
&77938560, \\
a_5=&-160448915469735241442136908278917088844111179846012597013010883109766\\
&72,\\
a_6=&216026225099443878192110703691596145681836890232383902466304.
\end{align*}
It is easy to verify that
\begin{align*}
\mathrm{ord}_{17}(a_1)=3, \quad \mathrm{ord}_{17}(a_i)=2, \quad 2\leq i \leq 6.
\end{align*}
From \eqref{add-3} and \eqref{last-Delta} we complete the proof of \eqref{p-1-2-mod289}.
\end{proof}

While we believe that this method is applicable to most of the congruences in Conjectures \ref{conj-1} and \ref{conj-2},  we are not sure if one can establish these congruences without the use of modular forms.

\section{Concluding Remarks}

Ramanujan's original proofs of \eqref{Rama-mod-5} and \eqref{Rama-mod-7} (see \cite{Rama1919}) involve the fourth and sixth powers of $(q;q)_\infty$.
In 1969,  Winquist \cite{Winquist} discovered an identity for $(q;q)_\infty^{10}$ and gave a proof of \eqref{Rama-mod-11} which is in the spirit of Ramanujan's proofs for \eqref{Rama-mod-5} and \eqref{Rama-mod-7}. Recently,  Hirschhorn \cite{Hirschhorn-JNT} gave a simple proof of \eqref{Rama-mod-11} that relies only on \eqref{eta-1} and \eqref{eta-3}. One common feature of the identities used by Ramanujan and Winquist is that for $d=4,6$ and 10,
$(q;q)_\infty^d$ can be expressed in the form
\begin{equation}\label{Qmn} \sum_{m,n=-\infty}^\infty A(m,n)q^{Q(m,n)},\end{equation} where $A(m,n)$ is a polynomial in $m$ and $n$ and
 $Q(m,n)$ is a degree 2 polynomial in $m$ and $n$. In 1985, J.P. Serre \cite{Serre-eta} showed that if $d$ is even, then
$(q;q)_\infty^d$ can be expressed in the form given by \eqref{Qmn} if and only if $d=2,4,6,8,10,14$ and 26. The proof of a series representations for $(q;q)_\infty^{26}$  was given for the first time in \cite{Serre-eta} although the identity in a different form was first discovered by A.O.L. Atkin (see \cite{Dyson}). For alternative representations of $(q;q)_\infty^{26}$, see the works \cite{Chan-Adv,Chan-JLMS} by Chan, S. Cooper and P.C. Toh.
In this work, we return to Ramanujan's original idea and derive congruences satisfied by $p_k(n)$ for certain rational number $k$ from  the series representations for $(q;q)_\infty^d$. In particular, it seems that this is the first time that expansions of $(q;q)_{\infty}^{14}$ and $(q;q)_{\infty}^{26}$ are associated to congruences analogous to Ramanujan's partition congruences \eqref{Rama-mod-5}--\eqref{Rama-mod-11}.

\subsection*{Acknowledgements}
We would like to thank Ernest X.W.\ Xia for his comments on an earlier version of this paper. We are also grateful to the referee for his/her suggestions. The second author is partially supported by ``the Fundamental Research Funds for the Central Universities'' and a start-up research grant of the Wuhan University.

\end{document}